\documentclass[pdflatex,sn-mathphys-num]{sn-jnl}% Math and Physical Sciences Numbered Reference Style
\usepackage{amsfonts}
\usepackage{amsmath, amssymb, amsthm}
\usepackage{enumerate}
\usepackage{epsfig}
\usepackage{verbatim}
\usepackage{bm} % for $\bm{1}$ to give a blackboard $1$
\usepackage{manyfoot}
\usepackage{xcolor}
\usepackage{hyperref}

\makeatletter
\@ifundefined{SetFootnoteHook}{\newcommand{\SetFootnoteHook}[1]{}}{}
\makeatother

\usepackage[section]{algorithm} % for algorithm environment
\usepackage{algpseudocode} % for algorithmic environment
\algnewcommand\algorithmicinput{\textbf{Input:}}
\algnewcommand\Input{\item[\algorithmicinput]}
\algnewcommand\algorithmicoutput{\textbf{Output:}}
\algnewcommand\Output{\item[\algorithmicoutput]}

\newtheorem{theorem}{Theorem}[section]
\newtheorem{proposition}[theorem]{Proposition}
\newtheorem{lemma}[theorem]{Lemma}
\newtheorem{corollary}[theorem]{Corollary}
\newtheorem{definition}[theorem]{Definition}
\newtheorem{remark}[theorem]{Remark}

\numberwithin{equation}{section}

\allowdisplaybreaks[1] %allows page breaks within align environments

\providecommand{\e}{\varepsilon}

\providecommand{\R}{\ensuremath{\mathbb{R}}}
\providecommand{\N}{\ensuremath{\mathbb{N}}}

\newcommand{\bsvarrho}{{\boldsymbol \varrho}}

\newcommand{\realiz}{\mathrm{R}}

\newcommand{\depth}{L}
\newcommand{\size}{M}

\newcommand{\Parallel}[1]{{\rm P}(#1)} % Parallelization

\DeclareMathOperator{\ReLU}{ReLU}

\newcommand{\norm}[2][]{\| #2 \|_{#1}} % norm
\newcommand{\snorm}[2][]{| #2 |_{#1}} % seminorm
\begin{document}

\title[NNs for SPPs]{Neural Networks for Singular Perturbations -- Finite Regularity}

\author[1]{\fnm{F.} \sur{Rohner}}\email{fabian.rohner@sam.math.ethz.ch}
\author[1]{\fnm{Ch.} \sur{Schwab}}\email{christoph.schwab@sam.math.ethz.ch}
\author*[2]{\fnm{C.} \sur{Xenophontos}}\email{xenophontos.christos@ucy.ac.cy}

%\equalcont{These authors contributed equally to this work.}

\affil[1]{\orgdiv{Seminar for Applied Mathematics}, \orgname{ETH Z\"{u}rich}, \orgaddress{\street{R\"{a}mistrasse 101}, \city{Z\"urich}, \postcode{8092}, \country{Switzerland}}}

%\state{Z\"urich}, 

\affil*[2]{\orgdiv{Department of Mathematics and Statistics}, \orgname{University of Cyprus}, \orgaddress{\street{PO BOX 20537}, \city{Nicosia}, \postcode{1678},  \country{Cyprus}}}

%\state{Nicosia},

\abstract{
%{\small
We study finite-element and 
deep feedforward neural network (DNN for short) 
expressivity rate bounds for solution sets of
a model linear, second order singularly perturbed, 
elliptic two-point boundary value problem, 
in Sobolev norms on a bounded interval $(-1,1)$,
with explicit dependence on 
the singular perturbation parameter $\e\in (0,1]$.
Emphasis is on low Sobolev regularity of the data,
i.e., source term $f$ and reaction coefficient $b$.
A proof of $\e$-explicit solution regularity
based on exponentially weighted energy-norm bounds 
is developed, and
\emph{$\e$-robust, algebraic expression rate bounds} 
in Sobolev norms for $\mathbb{P}_1$ Finite-Elements 
on exponential and Shishkin type meshes
is proved.
Expression rates for 
shallow (fixed depth) $\ReLU$-NNs are shown
which are robust w.r. to $\e$ and explicit
in terms of the NN size.
Robust NN expression rate bounds are further studied 
for deep feedforward DNNs with ReLU and tanh-activations. 
As in \cite{OSX24_1085}, 
tanh- and sigmoid-activated sub-NNs allow to include
exponential boundary layer functions
exactly into the NN feature space, 
leading to reduced NN sizes.
Recent 
bitstring encoding techniques for 
deep NNs with ReLU activations afford, 
still under low data regularity $f,b \in H^1(I)$ 
\emph{twice the (robust) convergence rate of $\mathbb{P}_1$ Finite-Elements} 
achievable with ``eXp'' or Shishkin meshes. 
%}
}

\keywords{Singular Perturbations, Robust Convergence, Shishkin Meshes, Neural Networks, Superconvergence}

\pacs[MSC Classification]{34B08, 34D15, 65L11, 65N30}

\maketitle

%\textbf{Subject Classification:}
%34B08, % Parameter dependent boundary value problems for ordinary differential equations
%34D15, % Singular perturbations of ordinary differential equations
%65L11, % Numerical solution of singularly perturbed problems involving ordinary differential equations
%65N30 % Finite elements, Rayleigh-Ritz and Galerkin methods
%%%%%%%%%%%%%%%%%%%%%%%%%%%%%%%%%%%%%%%%%%%%%%%%%%%%%%%%%%%
\section{Introduction}
\label{sec:intro} 
%%%%%%%%%%%%%%%%%%%%%%%%%%%%%%%%%%%%%%%%%%%%%%%%%%%%%%%%%%%
Elliptic and parabolic PDEs which depend on a small, 
non-dimensional physical parameter $\e>0$ arise in many
areas of science. Their numerical solution is well-known
to be challenging due to multiscale phenomena governing
solution regularity. A particular class of such problems
are \emph{singular perturbation problems} in solid and
fluid dynamics; we refer to \cite{JLLSgPert,TemamSgPert,Linss1985,RST2nd}
and the references there.

In all these applications, PDEs depend on a small 
parameter $\e \in (0,1]$ in physically relevant regimes of input data.
Standard numerical approximation methods (Finite Volume-, 
Finite Difference- or Finite Element Methods) 
generally do \emph{not} perform uniformly 
w.r. to the physical perturbation parameter $\e$:
in general, a so-called \emph{scale resolution} condition
relating the discretization parameters (such as the mesh width
$h$ in Finite Element Methods) and $\e$ needs to hold,
in order to exhibit theoretical approximation rates.
In this paper, 
we continue and complement our study \cite{OSX24_1085} 
on $\e$-robust expression rate bounds for 
solution families $\{ u_\e \}_{0<\e\leq 1}$ 
of elliptic singular perturbation problems. 
As in \cite{OSX24_1085,SS1996}, 
we consider the 
model linear, singularly perturbed, 
reaction-diffusion boundary value problem (BVP): 
find $u_{\e}(x)$ such that
\begin{eqnarray}
\mathcal{L}_{\e} u_{\e} := -\e^2 u''_{\e}(x) + b(x)u_{\e}(x) &=& f(x) \; , \; x \in I = (-1,1) \; , 
\label{eq:de} 
\\
u_{\e}(\pm 1) &=& 0, 
\label{eq:bc}
\end{eqnarray}
where $\e \in (0,1]$ is a small parameter that can approach zero, 
and $b(x), f(x)$ are given 
functions on $\overline{I}=[-1,1]$, 
with the reaction coefficient satisfying
\begin{equation}\label{eq:bbounds}
\overline{b} \ge b(x) \ge \underline{b} > 0 \; \mbox{a.e. on} \;  \overline{I}
\;\mbox{ for some constants} \; \overline{b}, \;\underline{b}.
\end{equation}
While in \cite{OSX24_1085}, \emph{analytic data} $b,f \in C^\infty(\overline{I})$ 
were assumed, we concentrate in the present note on \emph{data $b,f$ of finite, low Sobolev
regularity} in $I$.

A common trait of singularly perturbed, 
\emph{linear, elliptic} PDEs is an 
additive decomposition of the solution $u_{\e}$
into a regular part $u^S_\e$ and into boundary layer components $u^{BL}_{\e}$
(see, e.g., \cite{TemamSgPert} and the references there).
The regular solution part $u^S_\e$ 
may depend on the singular perturbation parameter $\e$, 
but is smooth independent of $\e$, i.e.
derivatives of $u^S_\e$
satisfy bounds in Sobolev norms which are bounded uniformly 
in terms of $\e$. 
A typical result for 
solutions $u_\e$ of \eqref{eq:de}, \eqref{eq:bc} 
is as follows
(see, e.g. \cite{TemamSgPert,RST2nd,Linss1985} and the references there).
Assume that for a nonnegative integer $k$, 
the data in \eqref{eq:de} satisfy $b, f \in C^{k+2}(\bar{I})$.
Then 
\begin{equation}\label{eq:uDec}
u_\e = u^S_\e + u^{BL}_\e \;,
\end{equation}
where the smooth part $u^S_\e$ 
and the boundary layer $u^{BL}_\e$ 
satisfy:
for integer $j=0,1,2,...,k+2$ 
exists $C_j > 0$ such that, 
for $\e\in (0 , 1]$ and for $x\in I$ 
holds
\begin{equation}\label{eq:SBLsmooth}
|(u^S_\e)^{(j)}(x)| \leq C_j \;,
\quad 
|(u^{BL}_\e)^{(j)}(x)| \leq C_j \e^{-j} \exp(-({\rm dist}(x,\partial I))/\e) \;.
\end{equation}
The solution part $u^{BL}_\e$
is not uniformly smooth in terms of $\e$. 
Its $k$-th derivative typically grows as $O(\e^{-k})$.
Due to their exponential decay off $\partial I$, the
$u^{BL}_\e$ are referred to as \emph{boundary layers}.
%%%%%%%%%%%%%%%%%%%%%%%%%%%%%%%%%%%%%%%%%%%%%%%%%%%%%%%%%%%
\subsection{Contributions}
\label{sec:Contrib}
%%%%%%%%%%%%%%%%%%%%%%%%%%%%%%%%%%%%%%%%%%%%%%%%%%%%%%%%%%%
While the pointwise bounds \eqref{eq:SBLsmooth} are, in terms
of their $\e$-dependence, optimal, one observes that 
at least $C^2(\overline{I})$-regularity of the data $b$ and $f$ 
in \eqref{eq:de}, \eqref{eq:bc} is required for them to hold.

We establish in Prop.~\ref{prop:Reg1} 
for a model, linear elliptic singular perturbation problem 
with source term $f\in H^1(I)$ and variable reaction coefficient $b\in H^1(I)$ 
in a bounded interval $I = (-1,1)$ an $\e$-explicit 
solution decomposition \eqref{eq:uDec}
into a regular part $u^S_\e$ of low, finite Sobolev regularity bounded
uniformly with respect to the perturbation parameter $\e$, 
and 
into exponential boundary layer functions $u^{BL}_\e$.
Unlike \cite{SS1996,Melenk1997} where analytic in $\overline{I}$ 
coefficients and source terms were assumed, we admit data of
finite Sobolev regularity $H^1(I)$. Unlike decompositions with 
pointwise derivative bounds \eqref{eq:SBLsmooth}, we establish $\e$-explicit
bounds on $u^{BL}_\e$ in exponentially weighted Sobolev spaces.

Based on the solution decomposition, 
we prove that the solution can be expressed robustly 
(i.e. with constants and rates independent of $\e$)
at algebraic (w.r. to the NN size as measured in terms of the number
of neurons) rates by various adapted Finite-Element Methods and DNN architectures, 
in various Sobolev norms. 
We show, in particular, in Thm.~\ref{thm:relubalanced}
that fixed depth, strict $\ReLU$-activated NNs
afford the same robust algebraic rates as $\mathbb{P}_1$-FEM on 
$\e$-dependent Shishkin, Bakhvalov and exponential meshes, 
in the ($\e$-dependent) energy norm.

For singular perturbations of 
constant coefficient differential operators, 
we exhibit novel, fixed-depth NN architectures
that are based on combination of $\ReLU$ and $\tanh$ activations
which allow robust solution approximation rates
with fewer neurons than strict $\ReLU$-activated NNs.

We prove that certain deep, so-called 
\emph{super-expressive} NN architectures from 
\cite{yang2025deepneuralnetworksgeneral}
afford, \emph{under the same finite Sobolev regularity assumptions 
on the coefficient $b$ and the source terms $f$} in \eqref{eq:de},
higher orders of robust solution approximation than 
the best results achievable by shallow NNs:
Specifically, 
with a given budget of $N$ neurons
we show there are deep NNs that 
afford a robust convergence rate which is higher than the 
robust rate afforded with CpwL spline approximations.
%%%%%%%%%%%%%%%%%%%%%%%%%%%%%%%%%%%%%%%%%%%%%%%%%%%%%%%%%%%
\subsection{Notation}
\label{eq:Notat}
%%%%%%%%%%%%%%%%%%%%%%%%%%%%%%%%%%%%%%%%%%%%%%%%%%%%%%%%%%%
We shall use standard notation: 
$\N = \{1,2,3,...\}$ and $\R$ denote the natural resp. the real numbers, 
and $I=(a,b)\subset \R$ shall denote an open, bounded interval.
For a summability index $q\in [1,\infty]$, 
we denote by $L^q(I)$ the Banach space of $q$-Lebesgue integrable,
real-valued functions on $I$. 
For integer differentiation order $k\in \N_0 = \{0,1,2,...\}$,
we denote by $W^{k,q}(I)$ the Banach space of $k$-times weakly 
differentiable functions in $L^q(I)$, whose $k$-th weak derivative
belongs to $L^q(I)$. 
For $k\in \N$, $W^{k,q}_0(I)$ denotes the closed subspace of 
functions in $W^{k,q}(I)$ with vanishing traces at $\partial I$.
In the hilbertian case $q=2$, we write $H^k(I) = W^{k,2}(I)$,
and $H^k_0(I)$.
Often we shall use the space ${\mathcal S}^p(I,\Delta)$ of 
continuous, real-valued functions $f:I\to \R$ 
on a bounded interval $I=(a,b)$ which are piecewise polynomials
of degree $p\geq 1$ on a finite partition 
$\Delta = \{a, x_1, x_2,..., x_{N-1}, b\} $, $N \in \N$,
where the points $x_j$ are pairwise distinct, and enumerated in 
increasing order, with $x_0 = a$ and $x_N = b$.
%%%%%%%%%%%%%%%%%%%%%%%%%%%%%%%%%%%%%%%%%%%%%%%%%%%%%%%%%%%
\subsection{Layout}
\label{sec:Layout}
%%%%%%%%%%%%%%%%%%%%%%%%%%%%%%%%%%%%%%%%%%%%%%%%%%%%%%%%%%%
In Section~\ref{sec:model}, we present the model problem,
and recap several results on $\e$-explicit solution regularity.
In Sec.~\ref{sec:FinReg} we establish a solution decomposition
into a regular part which belongs to $H^2(I)$ independent of
$\e \in (0,1]$ and boundary layers $u^{BL}_\e$ which are 
bounded in an exponentially weighted $H^2(I)$ norm 
uniformly w.r. to $\e$.

Sec.~\ref{sec:approx} re-proves, under this regularity,
results on robust FE approximation 
by continuous piecewise linear functions 
which are (under stronger, $C^2$ regularity)
due to Shishkin, Bakhvalov and others.

Sec.~\ref{sec:relunn} develops novel results related to deep neural networks.
Sec.~\ref{sec:nn} recaps terminology and definitions from neural networks.
Sec.~\ref{sec:ReLU CpwL} recaps a (basically known) fact on the equivalence
of Cpwl functions of a single variable with certain shallow ReLU NNs.
This implies, with the results from Sec.~\ref{sec:approx}, 
in Sec.~\ref{sec:RobNNSolAppr} 
robust algebraic approximation rates in the energy norm by 
fixed depth, $\ReLU$-activated NNs,
under in a sense minimal, 
finite low Sobolev regularity of coefficients and source
terms in the differential equation.
Sec.~\ref{sec:SupReLU} then studies the benefit of depth: 
still under the same finite regularity assumptions as in Sec.~\ref{sec:approx},
\emph{deep} feedforward DNNs with a 
combination of $\ReLU$ and $\tanh$ activations 
afford robust solution approximation, \emph{at a rate
twice as high as that of the $\mathbb{P}_1$-FE methods
on Shishkin or exponential meshes}.
%%%%%%%%%%%%%%%%%%%%%%%%%%%%%%%%%%%%%%%%%%%%%%%%%%%%%%%%%%%
\section{Model Problem and its Regularity}
\label{sec:model}
%%%%%%%%%%%%%%%%%%%%%%%%%%%%%%%%%%%%%%%%%%%%%%%%%%%%%%%%%%%
%%%%%%%%%%%%%%%%%%%%%%%%%%%%%%%%%%%%%%%%%%%%%%%%%%%%%%%%%%%%%5
\subsection{Variational Formulation. A-priori Estimates}
\label{sec:ModPrbVar}
%%%%%%%%%%%%%%%%%%%%%%%%%%%%%%%%%%%%%%%%%%%%%%%%%%%%%%%%%%%%%5
The variational formulation of \eqref{eq:de}, \eqref{eq:bc} reads: 
given $f\in H^{-1}(I)$, $b\in L^\infty(I)$, 
for $\e\in (0,1]$
find $u_{\e}\in H^1_0(I)$ such that
\begin{equation}
\label{eq:BVPweak}
B_{\e}(u_{\e},v) = F(v) \quad \forall v\in H^1_0(I)\;.
\end{equation}
Here, for $v,w\in H^1(I)$, 
\[
B_{\e}(w,v) = \int_I \{\e^2 w'v' + b(x) wv \} dx,
\quad
F(v) = \int_I f(x) v dx 
\;.
\]
The Lax-Milgram Lemma ensures, 
for given data $f$ and $b$ in \eqref{eq:BVPweak} as above
with \eqref{eq:bbounds},
existence and uniqueness
of weak solutions $\{ u_{\e} \}_{0<\e\leq 1} \subset H^1_0(I)$ 
of \eqref{eq:BVPweak}: 
with the ``energy norm''  $\| \circ \|_{1,\e,I}$, defined
for $w\in H^1(I)$ by
\begin{equation}
\label{eq:energy}
\| w \|_{1,\e,I}^2  
: = \int_I \left( \e^2 w'(x)^2 + b(x) w(x)^2 \right)dx
\simeq 
\left( \e \| w' \|_{L^2(I)} + \| \sqrt{b}w \|_{L^2(I)} \right)^2
\end{equation}
(with constant implied in $\simeq$ independent of $\e$) 
there hold continuity and coercivity, uniformly w.r. to $\e\in (0,1]$:
\begin{equation}\label{eq:ContCoerc}
\forall v,w \in H^1_0(I): 
\quad 
| B_{\e}(w,v) | \leq \| v \|_{1,\e,I} \| w \|_{1,\e,I} \;,
\quad 
B_{\e}(v,v) \geq \| v \|_{1,\e,I}^2 \;.
\end{equation}
Assuming that $b\in L^\infty(I)$ with \eqref{eq:bbounds} and also that $f\in H^{-1}(I)$, 
existence and uniqueness of the parametric solutions 
$\{ u_{\e} \}_{0<\e\leq 1} \subset H^1_0(I)$ follow.

We shall require $\e$-explicit a-priori estimates on $u_{\e}$. 
%[
For $0<\e\leq 1$ and $f\in H^{-1}(I)$, 
we define the $\e$-dependent $H^{-1}$ norm
\begin{equation}\label{eq:H-1edef}
\|f \|_{-1,\e,I} := \sup_{v\in H^1_0(I)} \frac{|f(v)|}{\| v \|_{1,\e,I}} \;.
\end{equation}
By \cite[Prop.8.14]{Brezis},
with $(\cdot,\cdot)$ denoting the $L^2(I)$ inner product,
\begin{equation}\label{eq:fH-1rep}
\forall f\in H^{-1}(I) \exists g_0,g_1\in L^2(I): 
\;
\forall v\in H^1_0(I): f(v) = (g_0,v) + (g_1,v') \;.
\end{equation}
Then, with $\simeq$ and $\lesssim$ 
denoting (in)equality with hidden constants independent of $\e \in (0,1]$,
using \eqref{eq:fH-1rep}
\begin{equation}\label{eq:fH-1:2}
\begin{array}{rcl}
\|f \|_{-1,\e,I} &=& \displaystyle 
 \sup_{v\in H^1_0(I)} \frac{|(g_0,v) + (g_1,v')|}{\| v \|_{1,\e,I}} 
\simeq \sup_{v\in H^1_0(I)} \frac{|(g_0,v) + (g_1,v')|}{ \e \| v' \|_{L^2(I)}  + \| v \|_{L^2(I)}} 
\\
& \lesssim & \| g_0(f) \|_{L^2(I)} + \e^{-1} \| g_1(f) \|_{L^2(I)} \;.
\end{array}
\end{equation}
From \eqref{eq:ContCoerc} and \eqref{eq:H-1edef}
\[
\| u_{\e} \|_{1,\e,I}^2 \leq B(u_{\e},u_{\e}) = |f(u_{\e})|  \leq \| f \|_{-1,\e,I} \| u_{\e} \|_{1,\e,I} 
\;.
\]
This implies with \eqref{eq:fH-1:2} that for $0<\e \leq 1$ it holds
\begin{equation}\label{eq:apriori-1}
\| u_{\e} \|_{1,\e,I} \leq \| f \|_{-1,\e,I} \lesssim \| g_0(f) \|_{L^2(I)} + \e^{-1} \| g_1(f) \|_{L^2(I)} \;.
\end{equation}
Assume now in addition that $f\in L^2(I)$. 
Then it holds
$$
\forall v\in H^1_0(I): \quad 
|F(v)| 
\leq \frac{\| f \|_{L^2(I)}}{\sqrt{\underline{b}}} \| \sqrt{b} v \|_{L^2(I)}
\leq \frac{\| f \|_{L^2(I)}}{\sqrt{\underline{b}}} \| v \|_{1,\e,I}
$$
so that
\begin{equation}\label{eq:apriori0}
\| u_{\e} \|_{1,\e,I} \leq \frac{\| f \|_{L^2(I)}}{\sqrt{\underline{b}}}\;,
\quad 
0<\e \leq 1 \;.
\end{equation}
%%%%%%%%%%%%%%%%%%%%%%%%%%%%%%%%%%%%%%%%%%%%%%%%%%%%%%%%%%
\subsection{Regularity}
\label{sec:FinReg}
%%%%%%%%%%%%%%%%%%%%%%%%%%%%%%%%%%%%%%%%%%%%%%%%%%%%%%%%%%
In order to study $\e$-uniform approximation rates  
for finite-parametric approximations of the family 
$\{ u_{\e} \}_{0<\e\leq 1} \subset H^1_0(I)$,
higher regularity of $u_{\e}$ in Sobolev scales with explicit
dependence on $\e$ is required. 
In the following result, we establish a decomposition \eqref{eq:uDec} 
for the parametric weak solutions $\{ u_\e \}_{0<\e\leq 1}\subset H^1_0(I)$ 
of \eqref{eq:BVPweak}.
While results of this type are, basically,
well known (e.g. \cite{Linss1985,RST2nd,TemamSgPert} and references there), 
we provide a proof based on weighted energy bounds which
does not resort to maximum principles and barrier-function comparison arguments. 
It does provide sufficient structural information 
on $\{ u_\e \}_{0<\e\leq 1}\subset H^1_0(I)$ for the ensuing
proof of $\e$-uniform approximation rate bounds of FE methods
and of DNNs, under somewhat weaker regularity assumptions on
the data $f$ and $b$ in \eqref{eq:BVPweak} as in \cite{Linss1985,RST2nd}.
For all $k\in \N$, 
$H^k(I)\subset C^{k-1,1/2}(\overline{I})$ with continuous embedding (\cite[Chap.~8]{Brezis}).
\begin{proposition}\label{prop:Reg1}
Consider the boundary value problem \eqref{eq:BVPweak},
with $b,f \in H^k(I)$, for a $k \in \{1,2\}$,
and with \eqref{eq:bbounds}, i.e. 
$\overline{b} \ge b(x) \ge \underline{b} > 0$ 
for every $x \in \overline{I}$ (observe that $H^1(I)\subset C(\overline{I})$).

Then the 
weak solutions $\{ u_{\e} \}_{0<\e\leq 1} \subset H^1_0(I)$
of \eqref{eq:BVPweak} admit the decomposition
\begin{equation}
\label{eq:decomp}
u_{\e} = u_0 + u^{BL}_{\e,-} + u^{BL}_{\e,+} + u^R_\e.
\end{equation}
Here, 
in each case $k=1,2$, $u_0$ and $u^R_\e$ belong to $H^k(I)$ with 
\begin{equation}\label{eq:u0H2}
\| u_0 \|_{H^k(I)} \leq C , \;\; \| u^R_\e \|_{1,\e,I} \leq C\e^k 
\;.
\end{equation}
for $C(b,f)>0$ 
depending on $\|f\|_{H^k(I)}$, $\| b \|_{H^k(I)}$ 
but independent of $\e\in (0,1]$.

The 
\emph{boundary layer functions} $u^{BL}_{\e,\pm}$ 
in \eqref{eq:decomp} satisfy 
\emph{exponentially weighted a-priori estimates}: 
for $0<\e \leq 1$ and for $k=1,2$ holds
\begin{equation}\label{eq:uBLw}
\e^2 \| (u^{BL}_{\e,\pm})'' \|_{L^2(I;w_{\e}^\pm)} \leq  C(b,f) <\infty
\end{equation}
with the \emph{exponential weight functions}
\begin{equation}
\label{eq:weight}
w_{\e}^\pm(x) := \exp \left(\frac{1\mp x}{\e \beta_*}\right) \;,\quad x\in I.
\end{equation}
Here, $\beta_* = 2/\sqrt{\underline{b}} > 0$ is independent of $\e$
and 
the exponentially weighted $L^2$-norm is defined as 
$$
\| v \|^2_{L^2(I;w_{\e}^\pm)}
:=
\int_I v(x)^2 w_{\e}^\pm(x)^2 dx \;.
$$
Whenever statements hold w.r. to either weight function, 
we write 
$\| v \|_{L^2(I;w_{\e})}$.
\end{proposition}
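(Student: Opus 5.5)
We construct the decomposition by matched asymptotics truncated at order $k$, and control the boundary layers through a weighted energy argument.

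\textbf{Outer expansion.} We take $u_0 := f/b$ when $k=1$, and $u_0 := f/b + \e^2 (f/b)''/b$ when $k=2$, truncating so that $u_0$ stays in $H^k(I)$. Since $H^k(I)$ is an algebra in one dimension and $b \geq \underline{b} > 0$, we have $f/b \in H^k(I)$. For $k=2$ one should simply take $u_0 = f/b$ and absorb the $\e^2$-correction into the remainder. Then $\mathcal{L}_\e u_0 - f = -\e^2 u_0''$, which is only in $H^{k-2}$. For $k=1$ this residual is a distribution, $-\e^2 (u_0')'$. Writing it in the form \eqref{eq:fH-1rep} with $g_1 = \e^2 u_0'$ and using \eqref{eq:fH-1:2}, its $\|\cdot\|_{-1,\e,I}$-norm is $\lesssim \e^{-1}\e^2\|u_0'\|_{L^2} = O(\e)$. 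For $k=2$ it is $O(\e^2)$ in $L^2(I)$, so \eqref{eq:apriori0} applies.

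\textbf{Boundary layers.} The function $u_0$ does not vanish at $\pm1$. We correct this with
\[
u^{BL}_{\e,\pm}(x) := -u_0(\pm1)\exp\bigl(-\sqrt{b(\pm1)}\,(1\mp x)/\e\bigr),
\]
using the frozen coefficient $b(\pm1)$; this is legitimate because $H^1(I)\subset C(\overline{I})$. The weighted bound \eqref{eq:uBLw} can then be checked directly: $\e^2 (u^{BL})'' = -b(\pm1)u_0(\pm1)e^{-\sqrt{b(\pm1)}(1\mp x)/\e}$. Since $\sqrt{b(\pm1)} \geq \sqrt{\underline{b}} = 2/\beta_* > 1/\beta_*$, the weighted $L^2$ integral of its square is bounded by $\int_0^2 e^{-2(2-1)s/(\e\beta_*)}\,ds \lesssim \e$, which is bounded. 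The boundary values at the opposite endpoint are exponentially small, $O(e^{-c/\e})$, and we fix them by a linear function that goes into the remainder.

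\textbf{Remainder.} We set $u^R_\e := u_\e - u_0 - u^{BL}_{\e,-} - u^{BL}_{\e,+} \in H^1_0(I)$, adjusted by that exponentially small linear correction. Its residual is
\[
\mathcal{L}_\e u^R_\e = \e^2 u_0'' + \sum_\pm \bigl(b(\pm1) - b(x)\bigr)\,u^{BL}_{\e,\pm}.
\]
The first term was treated above. For the second, $|b(x)-b(\pm1)| \leq C|1\mp x|^{1/2}$ when $b\in H^1(I)$ (from the Hölder embedding), and $\leq C|1\mp x|$ when $b\in H^2(I)$. Combined with the exponential decay of $u^{BL}_{\e,\pm}$, the $L^2$ norm is $\lesssim (\int_0^\infty s^{k'} e^{-2cs/\e}\,ds)^{1/2} = O(\e^{(k'+1)/2})$, where $k'=1$ or $2$. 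By \eqref{eq:apriori0} this gives $O(\e)$ for $k=1$, but only $O(\e^{3/2})$ for $k=2$.

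\textbf{Main obstacle.} The hard step is reaching the full $\e^k$ rate for $k=2$. The $L^2$ estimate above is not sharp. One should instead measure the residual in $\|\cdot\|_{-1,\e,I}$, or use weighted duality, pairing it against $v$ with $\|v\|_{L^\infty} \lesssim \e^{-1/2}\|v\|_{1,\e,I}$. If that still falls short, the fallback is to include a first-order layer correction: solve the frozen-coefficient layer equation with the Taylor term $b'(\pm1)(x\mp1)\,u^{BL}_{\e,\pm}$ as source. This correction is explicit (polynomial times exponential), satisfies \eqref{eq:uBLw}, and leaves a residual of order $|x\mp1|^{3/2}e^{-c|x\mp1|/\e}$, which is $O(\e^2)$ in $L^2$.

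The weighted statement \eqref{eq:uBLw} is intended to hold for the true layer parts, so a cleaner route is also possible: define $u^{BL}_{\e,\pm}$ as solutions of $\mathcal{L}_\e$ with localized boundary data. Testing with $w_\e^2 v$ then gives a weighted energy estimate, and the cross terms $\e^2 (w_\e^2)' v v'$ are absorbed because $\beta_* = 2/\sqrt{\underline b}$ makes $\e^2|w_\e'/w_\e|^2 \leq \underline b/4$. The $(u^{BL})''$ bound then follows from the equation, since $\e^2 (u^{BL})'' = b\,u^{BL}$.
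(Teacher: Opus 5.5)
Your $k=1$ argument is correct, and it takes a different route from the paper. The frozen-coefficient exponentials satisfy \eqref{eq:uBLw} by direct computation. The extra residual $(b(\pm1)-b)\,u^{BL}_{\e,\pm}$ is $O(\e)$ in $L^2(I)$ because $|b(x)-b(\pm1)|\leq C|1\mp x|^{1/2}$.

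For $k=2$, however, the remedy you list first cannot work: $\e^{3/2}$ is the true size of your remainder, not an artifact of the $L^2$ bound. With $\mu:=\sqrt{b(1)}$,
\[
r_+:=(b(1)-b)\,u^{BL}_{\e,+}=-b'(1)u_0(1)(1-x)e^{-\mu(1-x)/\e}+O\bigl(|1-x|^{3/2}e^{-\mu(1-x)/\e}\bigr).
\]
Test with $v(x)=\phi((1-x)/\e)$, where $0\leq\phi\in C^\infty_c(0,\infty)$ and $\phi\not\equiv0$. Then $|(r_+,v)|\simeq\e^2$, while $\|v\|_{1,\e,I}\simeq\e^{1/2}$. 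All other contributions to $B_\e(u^R_\e,v)$, from $\e^2u_0''$, the Taylor remainder, $r_-$ and the linear boundary fix, are $O(\e^{5/2})$. Continuity of $B_\e$ then forces $\|u^R_\e\|_{1,\e,I}\gtrsim\e^{3/2}$ whenever $b'(1)u_0(1)\neq0$. No choice of norm for the residual can improve this: $\|\cdot\|_{-1,\e,I}$ fails for the same reason, and the $L^\infty$ pairing just reproduces $\e^{-1/2}\|r_+\|_{L^1(I)}\simeq\e^{3/2}$.

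So the first-order layer corrector is mandatory, not a fallback. It does close the proof. Since $b'\in H^1(I)\subset C^{0,1/2}(\overline{I})$, the Taylor remainder is $O(|1\mp x|^{3/2})$. The corrector has the form $\e(\alpha t+\beta t^2)e^{-\mu t}$ with $t=(1\mp x)/\e$; it vanishes at $x=\pm1$ and satisfies \eqref{eq:uBLw}. The new residual is $O(\e^2)$ in $L^2(I)$, since the cross term of $b-b(\pm1)$ with the corrector is only $O(\e^{5/2})$. Then \eqref{eq:apriori0} gives $\|u^R_\e\|_{1,\e,I}\leq C\e^2$.

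The paper takes what you call the cleaner route, and that choice is exactly what removes your main obstacle. It takes $u^{BL}_{\e,\pm}$ to be the exact solutions of $\mathcal{L}_\e u=0$ with data $-u_0(\pm1)$ at one endpoint and $0$ at the other. The remainder then solves $\mathcal{L}_\e u^R_\e=\e^2u_0''$ with $u^R_\e(\pm1)=0$, so the case $k=2$ follows from \eqref{eq:apriori0} in one line, and $b$ is never expanded.

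The effort moves into \eqref{eq:uBLw}, and here your sketch skips a step: $w_\e^2u^{BL}_{\e,+}$ is not an admissible test function, because $u^{BL}_{\e,+}(1)\neq0$. The paper first subtracts the explicit lift $-u_0(1)\frac{1+x}{2}(w^+_\e)^{-1}$, whose image under $\mathcal{L}_\e$ is bounded in $L^2(I;w^+_\e)$. For the difference $U\in H^1_0(I)$ it proves $B_\e(U,Uw_\e^2)\geq C_1\|U\|^2_{1,w_\e}$ by exactly your absorption argument, using $\e^2|w_\e'/w_\e|^2=\underline{b}/4$. It then reads off the bound from $\e^2U''=bU-f^0_+$.

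In short, your explicit construction gives an elementary check of \eqref{eq:uBLw} and pointwise layer bounds for free. The price is endpoint Taylor expansions of $b$ matched to $k$, and an exponentially small boundary mismatch that must be absorbed somewhere. The paper's construction gives intrinsic layer functions and a remainder in $H^1_0(I)$, which is used later in the FE error analysis. The price is the weighted inf-sup argument.
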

\begin{proof}
Observe that the assumption $b,f \in H^k(I)$ for $k\in \{1,2\}$
implies with Sobolev's embedding that there are unique modifications
$b,f \in C^{k-1/2}(\overline{I})$.

We proceed in several steps:

\noindent
Step 1 [regularity of $u_0$]: 
The \emph{smooth} part $u_0$ in \eqref{eq:decomp}
is the $L^2(I)$-limit, as $\e \to 0$, of $u_{\e}$. I.e.
\begin{equation}
\label{u0}
u_0 = f/b.
\end{equation}
We calculate $u_0' = f'/b - fb^{-2} b'$, and 
using \eqref{eq:bbounds} have $u_0' \in L^2(I)$ in case $k=1$, 
i.e. $u_0\in H^1(I)$ for $b,f\in H^1(I)$.

Assume now $k=2$. 
Using again \eqref{eq:bbounds}, we have
$$ 
u''_0 = b^{-3}(f''b^2 - 2f'b'b - f b''b + 2f(b')^2) \in L^2(I)
$$
which shows that $u_0 \in H^2(I)$ under the provision that $b, f \in H^2(I)$.

\noindent
Step 2 [splitting definition]:
The boundary layer functions $u^{BL}_{\e,\pm}$ in \eqref{eq:decomp}
satisfy 
\begin{equation}
\left.\left.\begin{array}{rl}
\mathcal{L}_{\e} u^{BL}_{\e, -} & =0 \quad \text { in } I 
\\
u^{BL}_{\e,-}(-1) & =- u_0(-1) \\ u^{BL}_{\e,-}(1)&=0
\end{array}\right\} \; \begin{array}{rl}
\mathcal{L}_{\e} u^{BL}_{\e,+}& =0 \quad \text { in } I 
\\
u^{BL}_{\e,+}(-1) & =0 
\\
u^{BL}_{\e,+}(1)&=-u_0(1)
\end{array}\right\}. 
\label{BLs}
\end{equation}
As $u_0 = f/b \in H^1(I) \subset C(\overline{I})$, 
the nonhomogeneous Dirichlet boundary conditions in \eqref{BLs} are well-defined,
and for $k=1,2$ and $0<\e\leq 1$, 
there exist unique solutions $u^{BL}_{\e,\pm}\in H^{k+1}(I)$ of \eqref{BLs}.

\noindent
Step 3 [regularity of $u^R_\e$]: \newline
Due to \eqref{eq:decomp},
the \emph{remainder} $u^R_\e$ satisfies 
(using $\mathcal{L}_{\e} (u^{BL}_{\e,\pm}) = 0$)
\begin{equation}
\label{rem}
\begin{array}{rcl}
\mathcal{L}_{\e} u^R_{\e} &=& 
\mathcal{L}_{\e} (u_{\e} - u^{BL}_{\e,+} - u^{BL}_{\e,-} - u_0) =
\e^2 u''_{0} \quad \mbox{in}\;\; I  \; , 
\\
u^R_{\e}(\pm1) & = & (u_{\e} - u^{BL}_{\e,+} - u^{BL}_{\e,-} - u_0) (\pm 1) 
\\
 & = & - (u^{BL}_{\e,+} + u^{BL}_{\e,-} + u_0)(\pm 1)  = 0 \;.
\end{array}
\end{equation}

Assume $k=1$. 
Step 1 implies then $u_0 \in H^1(I)$ so that $u_0'' \in H^{-1}(I)$.
Using \eqref{eq:apriori-1}
with $g_0 = 0$ and with $g_1(f) = \e^2 u_0' \in L^2(I)$,
we find using \eqref{eq:fH-1:2} that $u^R_\e\in H^1(I)$ 
with bound
\[
\begin{array}{rcl}
\| u_\e^R \|_{1,\e,I} 
&\simeq & \e |u^R_\e|_1 + \| u^R_\e \|_{L^2(I)} 
\leq C\|f\|_{-1,\e,I} 
\leq C\e^{-1} \|g_1(f)\|_{L^2(I)} 
\\
&=  &  C\e^{-1} \e^2 \| u_0' \|_{L^2(I)} 
\leq C \e \| u_0 \|_{H^1(I)} 
\leq C \e (\| f \|_{H^1(I)} + \| b \|_{H^1(I)})
\end{array} 
\]
where we used \eqref{u0} in the final bound. This proves \eqref{eq:u0H2} for $k=1$.

If $k=2$, we use $f = \e^2 u_0'' \in L^2(I)$ and conclude with \eqref{eq:apriori0}
the bound \eqref{eq:u0H2}.

\noindent 
Step 4 [Proof of \eqref{eq:uBLw}]:
by symmetry it suffices to prove \eqref{eq:uBLw}
for $u^{BL}_{\e,+}$.
We write $w_\e$ in place of $w^+_\e$ and define for $x\in I$ 
\begin{equation}\label{uBLbr}
\breve{u}_{+}^{BL}(x) 
:= -\frac{1}{2}u_0(1)e^{-(1-x)/(\beta_*\e)} (1+x)
 = -u_0(1)\frac{1+x}{2}(w_{\e}(x))^{-1} 
\;.
\end{equation}
For $0<\e \leq 1$ we have 
$\breve{u}_{+}^{BL} \in C^\infty(\overline{I})$,
as well as 
$\breve{u}_{+}^{BL}(1) = -u_0(1) = u^{BL}_{\e,+}(1)$, 
so that
\begin{equation}\label{uBLH}
U_{+}^{BL}(x) := u^{BL}_{+}(x) - \breve{u}_{+}^{BL}(x) \in H^k(I), \; U_{+}^{BL}(1) = 0\;.
\end{equation}
The function $\breve{u}_{+}^{BL}(x)$ satisfies the weighted 
$L^2_{w_\e}$-norm bound \eqref{eq:uBLw}.
It remains to establish the estimate \eqref{eq:uBLw} for $U_{+}^{BL}(x)$.

Therefore, by \eqref{BLs}, \eqref{uBLH},
\begin{equation} \label{eq:LeAux}
\begin{array}{rcl}
\mathcal{L}_{\e} U_{+}^{BL}&=& - \mathcal{L}_{\e} \breve{u}_{+}^{BL} 
\\
&=& -\frac{1}{2} u_0(1) e^{-(1-x)/(\beta_* \e)} \left\{ \frac{2\e}{\beta_*}+\frac{1+x}{\beta_*^2} -b(x)(1+x) \right\} 
     =: f^0_+(x;\e).
\end{array}
\end{equation}
Inspecting the expression for $f^0_+(x;\e)$, 
in either case $k=1,2$ holds $f^0_+(\cdot;\e)\in H^{k}(I)$ for every $0<\e\leq 1$.
Also, the definition \eqref{eq:weight} of the weight function $w_\e$
implies that 
$\Vert f^0_+(\cdot ;\e) \Vert_{L^2_{w_{\e}}(I)}$ 
is bounded independent of $\e\in (0,1]$.

\noindent
Step 4.1: 
We claim there exists a positive constant $C$ independent of $\e\in (0,1]$, 
such that
$$
\e^2 \left \Vert \left( U_{+}^{BL} \right)'' \right \Vert_{L^2_{w_{\e}}(I)} 
\leq 
C \Vert f^0_+(\cdot ;\e) \Vert_{L^2_{w_{\e}}(I)}.
$$
To prove this, we consider the bilinear form 
$B_\e(\cdot,\cdot): H^1(I)\times H^1(I)\to \R$ in \eqref{eq:BVPweak}
(i.e. \emph{without} homogeneous Dirichlet boundary conditions), 
and equip the space $H^1(I)$ with an
exponentially weighted energy norm $\| \circ \|_{1,w_\e}$ 
given by
\begin{equation}
\label{eq:energy_w}
\| v \|^2_{1,w_\e} := \int_I ( \e^2 (v')^2 + v^2) w_\e^2 dx \;.
\end{equation}

\noindent Step 4.2 [inf-sup stability of $B_\e(\cdot,\cdot)$ in exponentially weighted spaces]:
\newline
We denote $H^1(I)$ normed with the weighted norm by $H^1_{w_\e}(I)$. 
In $H^1(I)$ with this norm, the bilinear form $B_\e(\cdot,\cdot)$ admits 
the following inf-sup condition:
\begin{equation}
\label{infsup}
\inf_{ u \in H^1_{w_{\e}}(I)} \sup_{v \in H^1_{1/w_{\e}}(I)} 
\frac{B_{\e}(u,v)} { \Vert u \Vert_{1,{w_{\e}}} \Vert v \Vert_{1,{1/w_{\e}}}} 
\ge 
\gamma 
\end{equation}
for some positive constant $\gamma$, independent of $\e \in (0,1]$. 

To show \eqref{infsup}, let $0 \ne u \in H^1_{w_{\e}}(I)$ be given, 
and set $v_u = u w^2_{\e} \in {H^1_{1/w_{\e}}(I)}$. 
Then
\begin{eqnarray*}
\Vert v_u \Vert^2_{1,1/w_{\e}} &=& \int_I \left[\e^2 (v'_u)^2 + v_u^2 \right] w_{\e}^{-2} =\int_I \left[ \e^2 \left( u' w^2_{\e} + 2 u w'_{\e} w_{\e} \right)^2 +  u^2 w^4_{\e}\right] w_{\e}^{-2} \\
& = & \int_I \left[ \e^2 \left( u' w^2_{\e} - 2 u \frac{1}{\beta_{*} \e} w_{\e} w_{\e} \right)^2 + u^2 w^4_{\e}\right] w_{\e}^{-2} \\
& \leq &  2 \int_I \left[ \e^2 (u')^2 w^2_{\e} + \frac{4}{\beta^2_*}u^2 w_{\e}^2 +  u^2 w^2_{\e} \right] = 2 \int_I \left[ \e^2 (u')^2  + \left(\frac{4}{\beta^2_{*} }+ 1\right) u^2  \right] w^2_{\e} \\
& \leq & C_0 \Vert u \Vert^2_{1,w_{\e}},
\end{eqnarray*}
with 
$C_0 = \max\left\{2,  \left(\frac{4}{\beta^2_{*} }+ 1 \right)\right\}$
independent of $\e$.

The inf-sup condition \eqref{infsup} 
is established with $\gamma = C_1/C_0$ if we can show there exists
$C_1>0$ such that, for every $0<\e \leq 1$,
\begin{equation}
\label{coercivity}
B_\e(u,v_u) \ge C_1 \Vert u \Vert^2_{1,w_{\e}} \; .
\end{equation}
We have using that $w_\e' = -w_\e/(\e \beta_*)$
\begin{eqnarray}
B_\e(u,v_u) &=& \e^2 \int_I u'( u w^2_{\e})' + \int_I b u^2 w_{\e}^2  \notag \\
& = & \e^2 \int_I (u')^2 w^2_{\e}+ \int_I b u^2 w_{\e}^2 - 2 \frac{\e}{\beta_{*}} \int_I u'u w^2_{\e} \label{A2}.
\end{eqnarray}
The last term in (\ref{A2}) has no definite sign, 
so we proceed as follows: 
using $2 A B \leq \frac{2}{3} A^2 + \frac{3}{2} B^2$, 
we find
$$
\left \vert 2 \frac{\e}{\beta_{*}} \int_I u'u w^2_{\e}  \right \vert 
\leq 
\frac{ 2 \e^2}{3}  \int_I (u')^2 w^2_{\e} 
+ 
 \frac{3}{2 \beta_*^2} \int_I u^2 w^2_{\e} .
$$
Since
$\beta_*^2  = 4/ \underline{b}$, 
$$
\frac{3}{2 \beta_*^2} \int_I u^2 w^2_{\e} 
= 
\frac{3}{8}\underline{b} \int_I u^2 w^2_{\e} 
\leq 
\frac{3}{8} \int_I b u^2 w^2_{\e}
\;,
$$
and
the last term in \eqref{A2} can be absorbed into the first two, 
and (\ref{coercivity}) is shown.
As a result, (\ref{infsup}) holds with 
$C_1 =  \min\{1/3, 5\underline{b}/8 \}$.

\noindent Step 4.3:
The symmetry of $B_\e(\cdot,\cdot)$ implies also the complementary 
inf-sup condition holds with same constant $\gamma>0$ independent of $\e$.

\noindent
Step 4.4 [a-priori bound on $U_{+}^{BL}$ in exponentially weighted norm]: 
\newline
Using that $U_{+}^{BL}$ solves \eqref{eq:LeAux}, i.e.
\[
\mathcal{L}_{\e} U_{+}^{BL}= f^0_+(x;\e)
\quad\mbox{in}\quad I, \qquad U_{+}^{BL}(\pm 1)=0 \;,
\]
the weighted a-priori estimate implied by (\ref{infsup}) 
gives that there exists $C>0$ such that for $0<\e \leq 1$
\begin{equation}\label{eq:wapriori}
\Vert U^{BL}_{+} \Vert_{L^2_{w_{\e}}(I)} 
\leq 
\Vert U^{BL}_{+} \Vert_{1,{w_{\e}}}
\leq
C \Vert f_{+}^0 \Vert_{L^2_{w_{\e}}(I)} .
\end{equation}
We also have from \eqref{eq:LeAux}
$$
-\e^2 \left( U_{+}^{BL} \right)'' = f^0_+ - b U_{+}^{BL} \;.
$$
Taking the $\| \circ \|_{L^2_{w_{\e}}(I)}$-norm of both sides,
we get
\begin{equation}
\label{claim0}
\e^2 \left \Vert  \left( U_{+}^{BL} \right)''\right \Vert_{L^2_{w_{\e}}(I)} 
\leq
\Vert f^0_+ \Vert_{L^2_{w_{\e}}(I)} +\Vert b \Vert_{L^\infty(I)} \Vert U_{+}^{BL} \Vert_{L^2_{w_{\e}}(I)}.
\end{equation}
Combining \eqref{eq:wapriori} with (\ref{claim0}) gives
$$
\e^2 \left \Vert  \left( U_{+}^{BL} \right)''\right \Vert_{L^2_{w_{\e}}(I)} 
\leq
(1+C \Vert b  \Vert_{L^\infty(I)})\Vert f^0_+ \Vert_{L^2_{w_{\e}}(I)}.
$$

\noindent
Step 5 [conclusion]:
Using \eqref{uBLH}, and the definition \eqref{uBLbr},
$$
\e^2
\Vert (u^{BL}_+)'' \Vert_{L^2_{w_{\e}}(I)}
\leq 
\e^2
\left(
 \Vert \left( U_{+}^{BL} \right)'' \Vert_{L^2_{w_{\e}}(I)} 
+\Vert \left( \breve{u}_{+}^{BL} \right)'' \Vert_{L^2_{w_{\e}}(I)}
\right)
$$
from which the desired result \eqref{eq:uBLw} follows.
\end{proof}
The preceding result, Prop.~\ref{prop:Reg1}, covers in particular the (classical)
case when $b(x) = const. = \underline{b} = \bar{b} > 0$.
For subsequent reference, we state it.
\begin{corollary}\label{cor:bconst} [Constant reaction term $b(x)$]
Under the assumptions of Prop.~\ref{prop:Reg1}, with additionally 
constant reaction coefficient $b(x) = b > 0$, the statement 
of Prop.~\ref{prop:Reg1} holds, up to constants to accommodate
the boundary conditions in \eqref{BLs}, 
with 
\begin{equation}\label{eq:uBLexp}
	u^{BL}_{\e,\pm}(x) = C^{\pm} \exp\left( - \sqrt{b}(1 \mp x)/\e \right) + const. \;,
\end{equation}
where the constants $C^{\pm}$ are bounded from above independently of $\e \in (0,1]$. 
For example, for a right-hand side $f$ with $f(-1)=-1$, 
$u^{BL}_{\e,-}$ 
is determined from
\begin{eqnarray*}
-\e^2 (u^{BL}_{\e,-})''(x) + b u^{BL}_{\e,-}(x) &=& 0 \; , \; x \in I=(-1,1), \\
u^{BL}_{\e,-}(-1) = 1, u^{BL}_{\e,-}(1) &=& 0,
\end{eqnarray*}
as
$$
u^{BL}_{\e,-} (x) = \frac{e^{-\sqrt{b}(1+x)/\e} - e^{-\sqrt{b}(1-x)/\e}}{\left( e^{\sqrt{b}/\e} - e^{-\sqrt{b}/\e}\right) \left( e^{\sqrt{b}/\e} + e^{-\sqrt{b}/\e}\right)}
\in 
{\rm span}\{ \exp(- \sqrt{b}(1\pm x)/\e) \} 
\;.
$$
\end{corollary}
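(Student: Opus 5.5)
The plan is to reuse the decomposition \eqref{eq:decomp} from Prop.~\ref{prop:Reg1} verbatim, with $u_0 = f/b$ and $u^R_\e$ defined exactly as in Steps~1 and~3 there. The constancy of $b$ only simplifies things. Indeed, $u_0 = f/b$ has $\|u_0\|_{H^k(I)} \le b^{-1}\|f\|_{H^k(I)}$, and the remainder bound $\|u^R_\e\|_{1,\e,I}\le C\e^k$ is inherited unchanged. So the only new content is the explicit form \eqref{eq:uBLexp} of the boundary layer functions defined in \eqref{BLs}, together with uniform bounds on the coefficients.

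With $b>0$ constant, the homogeneous equation $-\e^2 v'' + b v = 0$ is a constant coefficient linear ODE. Its solution space is two-dimensional and spanned by $\exp(-\sqrt{b}(1+x)/\e)$ and $\exp(-\sqrt{b}(1-x)/\e)$, since these are the two exponentials $e^{\pm\sqrt b x/\e}$ up to scaling. Hence $u^{BL}_{\e,-} = A e^{-\sqrt b(1+x)/\e} + B e^{-\sqrt b(1-x)/\e}$, and the boundary conditions $u^{BL}_{\e,-}(-1) = -u_0(-1)$, $u^{BL}_{\e,-}(1)=0$ give the $2\times 2$ system
\[
A + B e^{-2\sqrt b/\e} = -u_0(-1), \qquad A e^{-2\sqrt b/\e} + B = 0 .
\]
Its determinant $1-e^{-4\sqrt b/\e}$ is bounded below by $1-e^{-4\sqrt b}>0$ uniformly in $\e\in(0,1]$. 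Solving gives
\[
A = -u_0(-1)/(1-e^{-4\sqrt b/\e}), \qquad B = -A e^{-2\sqrt b/\e}.
\]
Both are bounded by $C|u_0(-1)|\le C\|f\|_{H^1(I)}$, using the embedding $H^1(I)\subset C(\overline I)$. The case $u^{BL}_{\e,+}$ follows by the reflection $x\mapsto -x$.

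The dominant term is the stated exponential with coefficient $C^\pm$. The other term is $B e^{-\sqrt b(1-x)/\e}$ with $|B|\lesssim e^{-2\sqrt b/\e}$, hence $|B e^{-\sqrt b(1-x)/\e}|\lesssim e^{-\sqrt b(1+x)/\e}e^{-2\sqrt b/\e}\cdot e^{2\sqrt b/\e}$. This term is the exponentially small correction that the statement's ``up to constants'' absorbs; I would state it precisely as a second span element with coefficient $O(e^{-2\sqrt b/\e})$. The main subtlety is this bookkeeping, i.e.\ making honest what ``$+\,const.$'' means, rather than any analytic estimate.

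For the worked example, I would take $u_0(-1) = f(-1)/b$, normalize to boundary value $1$, and plug the solved $A,B$ in. Multiplying numerator and denominator by $e^{2\sqrt b/\e}$ gives the displayed closed form. Its denominator $(e^{\sqrt b/\e}-e^{-\sqrt b/\e})(e^{\sqrt b/\e}+e^{-\sqrt b/\e}) = e^{2\sqrt b/\e}-e^{-2\sqrt b/\e}$ should be checked against the computed determinant, since the normalization may differ. Finally, the weighted bound \eqref{eq:uBLw} could also be verified directly from the explicit formula, provided $\sqrt b > 1/\beta_* = \sqrt{b}/2$. That inequality holds, so $\e^2\|(u^{BL}_{\e,\pm})''\|_{L^2(I;w_\e^\pm)}$ is uniformly bounded, consistently with Prop.~\ref{prop:Reg1}.
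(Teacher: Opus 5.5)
Your route matches the argument the corollary relies on (the paper gives no separate proof): inherit $u_0=f/b$ and $u^R_\e$ from Prop.~\ref{prop:Reg1}, write $u^{BL}_{\e,-}=Ae^{-\sqrt b(1+x)/\e}+Be^{-\sqrt b(1-x)/\e}$, and solve the $2\times 2$ system, whose determinant satisfies $1-e^{-4\sqrt b/\e}\ge 1-e^{-4\sqrt b}$. Your $A$ and $B$ are correct. You are also right that ``$+\,const.$'' in \eqref{eq:uBLexp} cannot be meant literally, since a nonzero constant does not solve $-\e^2v''+bv=0$. The precise statement is membership in $\mathrm{span}\{\exp(-\sqrt b(1\pm x)/\e)\}$ with coefficients bounded uniformly in $\e$. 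That is all Theorem~\ref{thm:sReLUbcst} later uses.

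The step that fails is your claim that multiplying by $e^{2\sqrt b/\e}$ reproduces the displayed closed form. Normalize the boundary value to $1$; as you noticed, this is consistent with $f(-1)=-1$ only when $b=1$, because $-u_0(-1)=-f(-1)/b$. Set $q:=e^{-2\sqrt b/\e}$. Your coefficients $A=(1-q^2)^{-1}$ and $B=-q(1-q^2)^{-1}$ then give
\[
u^{BL}_{\e,-}(x)=\frac{e^{-\sqrt b(1+x)/\e}-e^{-2\sqrt b/\e}\,e^{-\sqrt b(1-x)/\e}}{1-e^{-4\sqrt b/\e}}
=\frac{e^{\sqrt b(1-x)/\e}-e^{-\sqrt b(1-x)/\e}}{e^{2\sqrt b/\e}-e^{-2\sqrt b/\e}}
=\frac{\sinh(\sqrt b(1-x)/\e)}{\sinh(2\sqrt b/\e)} .
\]
The denominator you flagged for checking does agree with the displayed one. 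The numerator does not. The displayed numerator is
\[
e^{-\sqrt b(1+x)/\e}-e^{-\sqrt b(1-x)/\e}=-2e^{-\sqrt b/\e}\sinh(\sqrt b x/\e),
\]
which is odd in $x$. Hence the displayed function vanishes at $x=0$ and equals $\mp(1+e^{2\sqrt b/\e})^{-1}$ at $x=\pm1$. It solves the ODE but satisfies neither boundary condition. You therefore cannot ``verify'' the displayed formula. Your write-up should flag it as incorrect and replace it by the expression above. The final conclusion that $u^{BL}_{\e,-}$ lies in the span of the two exponentials, with $\e$-uniformly bounded coefficients, is unaffected.
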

\begin{remark}
\label{rmk:PtswiseBd}
		Pointwise bounds for the boundary layer terms
		$u_{\e,\pm}^{BL}$, as typically arising in boundary layer
		decompositions, remain valid under the regularity assumption $b,f \in H^1(I)$. 
                See, for example, \cite[Lemma~6.2]{ORiordanShishkin2012}. 
                In particular, there exists a constant $C(b,f)>0$ 
                independent of $\e \in (0,1]$ such that
		\begin{equation}\label{eq:uBLpwBound}
			|u^{BL}_{\e,\pm} (x)| \le C 
			%\exp(- \sqrt{\underline{b}} (x \mp 1)/ \e ), \quad x \in [-1,1].
			\exp \left(-\frac{1\mp x}{\e \beta_*}\right), \quad x \in [-1,1].
		\end{equation}
		
		We provide a proof to keep the exposition self-contained:
		By symmetry, we only consider the boundary layer at the left endpoint $x=-1$.
		Since $u^{BL}_{\e,-}$ solves the homogeneous problem \eqref{BLs} and $b \in H^1(I) \hookrightarrow C^{0,1/2}(\overline{I})$, (classical) elliptic regularity theory implies $u^{BL}_{\e,-} \in C^{0}(\overline{I}) \cap C^{2,1/2}(I)$.
		
		Define
		$$
			\phi(x) := 
	%		\exp(- \sqrt{\underline{b}} (x+1)/ \e )
			\exp \left(-\frac{1 +  x}{\e \beta_*}\right)
			, \qquad x \in [-1,1].
		$$
		Then, for any $0 < \e \le 1$,
		$$ \mathcal{L}_{\e} \phi = \left(b(x) - \frac{\underline{b}}{4}\right) \phi \ge 0.$$
		Moreover,
		$$|u^{BL}_{\e,-} (-1)| = |-u_0(-1)| = |-u_0(-1)| \phi(-1), \qquad 
                  |u^{BL}_{\e,-} (1)| = 0 \le |-u_0(-1)| \phi(1).
                $$
		Introducing
		$$ \psi_{\pm}(x) =  |u_0(-1)| \phi(x) \pm  u^{BL}_{\e,-}(x),$$
		we obtain
		$$\psi_{\pm}(-1) \ge 0, \qquad \psi_{\pm}(1) \ge 0.$$
		By the strong maximum principle,
		\begin{equation*}
			|u^{BL}_{\e,-} (x)| \le |u_0(-1)| 
%			\exp(- \sqrt{\underline{b}} (x+1)/ \e )
			\exp \left(-\frac{1 + x}{\e \beta_*}\right)
			, \quad x \in [-1,1].
		\end{equation*}
	\end{remark}
%
%%%%%%%%%%%%%%%%%%%%%%%%%%%%%%%%%%%%%%%%%%%%%%%%%%%%%%%%%%%
\section{Robust ${\mathbb P}_1$ Finite-Element Approximation}
\label{sec:approx}
%%%%%%%%%%%%%%%%%%%%%%%%%%%%%%%%%%%%%%%%%%%%%%%%%%%%%%%%%%%
Robust, i.e. $\e$-explicit, numerical approximation of 
the solution of (\ref{eq:de})--(\ref{eq:bc}) 
by the Finite Volume Method (FVM) and by the 
Finite Element Method (FEM) was studied in many works.
We mention only \cite{RST2nd} and the references there.
These references address the setting where the data $f$
and $b$ in \eqref{eq:de}, \eqref{eq:bc} have finite Sobolev regularity,
as opposed to \cite{SS1996,Melenk1997,OSX24_1085} where
in \eqref{eq:de} reaction coefficients $b$ and source terms $f$ 
analytic in $\overline{I}$ were considered.
%%%%%%%%%%%%%%%%%%%%%%%%%%%%%%%%%%%%%%%%%%%%%%%%%%%%%%%%%%
\subsection{Galerkin Approximation}
\label{sec:GalApr}
%%%%%%%%%%%%%%%%%%%%%%%%%%%%%%%%%%%%%%%%%%%%%%%%%%%%%%%%%%
Galerkin approximations of \eqref{eq:de} -- \eqref{eq:bc}
will be based on the weak formulation \eqref{eq:BVPweak}.
In order to define the discrete version of (\ref{eq:BVPweak}), 
for $N\in\N$,
let $\Delta = \{ x_j \}_{j=0}^{N}$ be a
partition of $I$ with $-1 = x_0 < x_1 < ... < x_N = 1$
and set 
$I_j = [x_{j-1} , x_{j} ]$, $h_j = x_{j} - x_{j-1}, j=1, \ldots, N, h = \max_{1\leq j \leq N} h_j$. 
With $\mathcal{P}_1(I)$ the space of polynomials of degree at most $1$ on $I$, 
we define 
\begin{eqnarray}
\label{eq:Sp}
\mathcal{S}^1(\Delta) &:=& \{ w \in H^1(I) : w|_{I_j} \in \mathcal{P}_1(I_j), j=1,\dots,N \} ,
\\
\mathcal{S}^1_0(\Delta) &:=& \mathcal{S}^1(\Delta) \cap H^1_0(I) \label{eq:Sp0}.
\end{eqnarray}
The Galerkin discretization of (\ref{eq:BVPweak}) 
then reads: 
find $u^\Delta_{\e} \in \mathcal{S}^1_0(\Delta)$ 
such that
for all $v\in \mathcal{S}^1_0(\Delta)$, 
there holds 
\begin{equation}
\label{eq:discrete}
B_\e(u^\Delta_{\e},v)
=
\int_{-1}^1 \left\{ \e^2 \left(u^\Delta_{\e}\right)' v' + b u^\Delta_{\e} v \right\} dx = \int_{-1}^1 f v dx.
\end{equation}
Associated with the above problem, 
in the \emph{energy norm} given by (\ref{eq:energy}),
the best approximation property holds:
\begin{equation}
\label{eq:best_approx}
 \forall \; v \in \mathcal{S}^1_0(\Delta):
\quad 
\Vert u_{\e} - u^{\Delta}_{\e} \Vert_{1,\e,I} 
\leq 
\Vert u_{\e} -v \Vert_{1,\e,I} 
\;.
\end{equation}
The FE error is bounded by the best approximation error, in the energy norm \eqref{eq:energy},
uniformly for $0\leq \e \leq 1$.
%%%%%%%%%%%%%%%%%%%%%%%%%%%%%%%%%%%%%%%%%%%%%%%%%%%%%%%%%%
\subsection{Layer-adapted Meshes}
\label{sec:LayAdMes}
%%%%%%%%%%%%%%%%%%%%%%%%%%%%%%%%%%%%%%%%%%%%%%%%%%%%%%%%%%
Layer adapted meshes are well known to allow for $\e$-uniform 
approximation of exponential boundary layer functions \eqref{eq:uBLexp} 
and, hence, also of $u_\e$. 
See, e.g., \cite{Linss1985,RST2nd} and references there.
We briefly recap the construction of layer-adapted meshes $\Delta$ in $I$.

We begin with splitting the domain $\overline{I}=[-1,1]$
%into layer and coarse regions 
as follows:
\begin{equation}\label{eq:split}
I_{\text {layer }}^{-}:=[-1,-1+\lambda] 
\; , \; 
I_{\text {coarse }}:=[-1+\lambda, 1-\lambda] 
\; , \; 
I_{\text {layer }}^{+}:=[1-\lambda, 1] \text {, }
\end{equation}
where $\lambda(\e, N)$ 
is chosen as
\begin{equation}\label{eq:trpt}
\lambda = \min\{ \theta \e \ln N, 1/4\} \leq \frac{1}{4},
\end{equation}
where $\theta > 0$ is a (fixed) constant 
that is independent of $\e$ and of $N$ which is 
to be specified shortly (see, e.g., \eqref{eq:theta} below)
and $N+1$ is the total number of nodal points. 
The points 
$-1 + \lambda$ and $1 - \lambda$ 
are referred to as the \emph{transition points} of the mesh.

If $\lambda \geq 1/4$, the problem is not singularly perturbed, and
standard approximation bounds on (quasi-) uniform meshes will imply
the asserted bounds. 
%{\color{magenta} How could $\lambda$ be greater than $1/4$? Its definition prohibits that. For $\lambda=1/4$, we
%find ourselves in the $N \ge \e^{-1}$ regime, for which Martin Stynes told me that the somewhat bold claim made
%has not been proved anywhere in the literature, and he himself does not believe it. I propose we just make the assumption
%(\ref{eq:eN}) in order to study the singularly perturbed case only.}
We thus assume 
\begin{equation}
\label{eq:eN}
\lambda < 1/4 \;\; \mbox{as well as}\;\; 1 \leq N < \e^{-1}.
\end{equation}
%
%when $N>1/\e$, the boundary layers are resolved.
The $N+1$ nodal points\footnote{We assume, without loss of generality, 
that $N$ is divisible by 4.} 
are given by
$$
	x_{i}=\left\{ 
	\begin{array}{cc}
		-1+\theta \varepsilon \phi \left( \frac{2i}{N}\right) , & i=0,\ldots ,N/4 \\ 
		\frac{4i}{N}(1-\lambda )-2+2\lambda , & i=N/4,\ldots ,3N/4 \\ 
		1-\theta \varepsilon \phi \left( 2-\frac{2i}{N}\right)  & i=3N/4,\ldots ,N%
	\end{array}%
	\right. 
$$
where $\phi$ is a so-called \emph{mesh-generating function}
with the following properties:
\begin{enumerate}
\item $\phi$ is monotonically increasing.
\item $\phi(0)=0, \phi(1/2) = \ln (\alpha N) \text{ for some }\alpha \in \mathbb{R}^+$.
\item $\phi$ is piecewise differentiable with $\max \phi' \leq C N$.
\end{enumerate}
The mesh size $h_i := |I_i| = x_i - x_{i-1}$ satisfies (see, e.g., \cite{Linss1985})
\begin{equation}\label{eq:hj_exp}
h_{i}
\leq 
CN^{-1}
\left\{ 
\begin{array}{cc}
 \e \max_{I_i}| \phi' | , & i=0,\ldots ,N/4, \\ 
 & i=N/4,\ldots ,3N/4, \\ 
 \e  \max_{I_i}| \phi' |,  & i=3N/4,\ldots ,N.%
\end{array}%
\right. 
\end{equation}
Related to $\phi$ is the so-called \emph{mesh characterization function}
\begin{equation}\label{eq:meshchar}
\psi = e^{-\phi},
\end{equation}
and there holds $\phi = - \ln \psi, \phi' = - \psi' (\psi)^{-1}$.
Meshes generated as above, 
are called (generalized if $\alpha \ne 1$) S-type meshes 
(see, e.g., \cite{FranzCAX2018} and the references therein). 
S-type meshes comprise and unify several classical
layer-adapted mesh constructions.
%%%%%%%%%%%%%%%%%%%%%%%%%%%%%%%%%%%%%%%%%%%%%%%%%%%%%%%%
\subsubsection{Shishkin mesh $\Delta_S$}
\label{sec:Shish}
For the Shishkin mesh \cite{Shishkin1989}, we use $\phi$ in \eqref{eq:meshchar} with $\alpha =1$,
\begin{equation}\label{eq:mshchShisk}
	\phi(t) = 2t \ln N \; , \; \psi(t) = N^{-2t}.
\end{equation}
There holds $\max|\psi'| = 2 \ln N$.
The resulting mesh, denoted by $\Delta_S$,
is uniform on the sub intervals $I^\pm_{\text{layer}}$, $I_{\text{coarse}}$ in \eqref{eq:split}.
This patchwise uniform structure has made $\Delta_S$ a popular choice
(see \cite{Linss1985,RST2nd,Shishkin1989} and the references there).

In the following we will assume
\begin{equation*}
	\lambda  = \theta \e \ln(N), \quad \text{and} \quad N \le \e^{-1}.
\end{equation*}
At this point we also specify $\theta$ as
\begin{equation}
	\label{eq:theta}
	\theta = 2 \beta_*,
\end{equation}
and introduce
\begin{equation*}
	\lambda_{\e} := \theta \e \ln(1/\e) > \lambda.
\end{equation*}
With these choices, the interval lengths $h_i = |I_i| $ satisfy
\begin{equation}\label{eq:mesh-lengths-fine}
	h_i = 4 \theta \e N^{-1} \ln(N),
	\quad \text{for } i  = 1,\dots,N/4,\,3N/4 +1,\dots,N,
\end{equation}
and
\begin{equation}\label{eq:mesh-lengths-coarse}
	h_i = 3 N^{-1} \leq h_i \leq 4 N^{-1} , \quad \text{for } i  = N/4+1,\dots,3N/4.
\end{equation}
\begin{proposition}
\label{prop:shish}
For $0<\e\leq 1$ and $b,f \in H^1(I)$,
let $u_{\e}$ be the solution to \eqref{eq:de}--\eqref{eq:bc}.
Let $u_{\e}^{\Delta_S}\in \mathcal{S}^1_0(\Delta_S)$ be its  
CpwL $\mathbb{P}_1$-FE approximation on the Shishkin mesh $\Delta_S$ 
at length scale $\e\in (0,1]$ with $1\leq N \leq 1/\e$ many  intervals.

Then there exists a positive constant $C$, independent of $\e$, 
such that 
for every $0<\e \leq 1$ and 
for every $1\leq N \leq 1/\e$ holds
\begin{equation}\label{eq:ShiskBd}
\Vert u_{\e} - u_{\e}^{\Delta_S} \Vert_{1,\e,I} \leq C N^{-1} \ln N.
\end{equation}
\end{proposition}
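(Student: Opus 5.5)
By the best approximation property \eqref{eq:best_approx}, it suffices to bound $\|u_\e - I^N u_\e\|_{1,\e,I}$, where $I^N$ denotes the nodal CpwL interpolant on $\Delta_S$. The interpolant lies in $\mathcal{S}^1_0(\Delta_S)$ since $u_\e(\pm1)=0$. We apply Prop.~\ref{prop:Reg1} with $k=1$ and split $u_\e = u_0 + u^{BL}_{\e,-}+u^{BL}_{\e,+}+u^R_\e$. By linearity of $I^N$ we then estimate each term separately.

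\textbf{Smooth part and remainder.} For $u_0\in H^1(I)$, nodal interpolation is $H^1$-stable on each element. Hence $\e|u_0-I^Nu_0|_{H^1}\le 2\e\|u_0'\|_{L^2}\le C\e\le CN^{-1}$, using $N\le 1/\e$. For the $L^2$ part, on each element $\|v-I^Nv\|_{L^2(I_i)}\le Ch_i|v|_{H^1(I_i)}$ and $h_i\le 4N^{-1}$, so $\|u_0-I^Nu_0\|_{L^2}\le CN^{-1}$. The remainder is harder to handle through its interpolant, because $u^R_\e$ is only controlled in the energy norm. We therefore do not interpolate it. Instead we use best approximation with the competitor $v = I^N(u_0+u^{BL}_{\e,-}+u^{BL}_{\e,+})$ (its boundary values vanish by \eqref{BLs}), giving
\[
\|u_\e-u_\e^{\Delta_S}\|_{1,\e,I}\le \|u^R_\e\|_{1,\e,I} + \sum \|(\cdot)-I^N(\cdot)\|_{1,\e,I}.
\]
By \eqref{eq:u0H2}, $\|u^R_\e\|_{1,\e,I}\le C\e\le CN^{-1}$.

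\textbf{Boundary layers.} By symmetry we treat only $u^{BL}_{\e,+}$ with weight $w_\e=w_\e^+$. On the fine region $I^+_{\rm layer}$, with $h=4\theta\e N^{-1}\ln N$, the standard estimate
\[
\e\|v'-(I^Nv)'\|_{L^2(I_i)}+\|v-I^Nv\|_{L^2(I_i)}\le C(\e h+h^2)\|v''\|_{L^2(I_i)}
\]
together with $w_\e\ge 1$ gives a bound of $C(h/\e)\,\e^2\|v''\|_{L^2(I;w_\e)}$. This is $\le CN^{-1}\ln N$ by \eqref{eq:uBLw}.

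On the coarse region $[-1,1-\lambda]$, the $H^2$-based estimate fails because $h\sim N^{-1}\gg\e$. Here we instead use pointwise smallness: by \eqref{eq:uBLpwBound} with $\theta=2\beta_*$, we have $|u^{BL}_{\e,+}|\le Ce^{-\lambda/(\e\beta_*)}=CN^{-2}$ there, so both the function and its interpolant are $O(N^{-2})$ in $L^2$ and $L^\infty$. For the derivative, the interpolant's slopes are bounded by $2CN^{-2}/h_i\le CN^{-1}$, so $\e\|(I^Nv)'\|_{L^2}\le C\e N^{-1}$. The term $\e\|v'\|_{L^2(-1,1-\lambda)}$ requires a weighted derivative bound. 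Applying interpolation to $w_\e v$, or integrating by parts, $\e^2\int v'^2 w_\e^2$ is controlled by $\|v\|_{L^2_{w}}\,\e^2\|v''\|_{L^2_w}$ plus boundary terms. Combined with the weighted energy bound \eqref{eq:wapriori} for $U^{BL}_+$ and the explicit form of $\breve u^{BL}_+$, this gives $\e\|v'w_\e\|_{L^2}\le C$. Hence $\e\|v'\|_{L^2(-1,1-\lambda)}\le C\sup_{x\le 1-\lambda}w_\e^{-1}=CN^{-2}$.

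The element $I_{3N/4}$ straddling the transition point lies in the coarse region and is covered by the same argument. I expect the main obstacle to be this coarse-region derivative estimate, namely extracting a weighted $H^1$ bound for $u^{BL}_{\e,+}$ from \eqref{eq:uBLw}, \eqref{eq:wapriori} and the explicit $\breve u^{BL}_+$. Summing all contributions yields \eqref{eq:ShiskBd}.
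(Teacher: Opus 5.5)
Your proof is correct and uses the same ingredients as the paper. These are the $k=1$ decomposition, the competitor $I^N(u_0+u^{BL}_{\e,-}+u^{BL}_{\e,+})$ with $u^R_\e$ simply dropped, and the weighted $H^2$ bound \eqref{eq:uBLw} on the fine layer mesh. Away from the layer, both use the pointwise bound \eqref{eq:uBLpwBound} for the $L^2$ part and the weighted $H^1$ bound \eqref{eq:wapriori} for the derivative.

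The only structural difference is in the off-layer region. The paper splits it again at $-1+\lambda_\e$, with $\lambda_\e=\theta\e\ln(1/\e)$, and treats the far part $K_3$ by coarse-mesh $H^2$ interpolation, using $w_\e^{-2}(-1+\lambda_\e)=\e^4$. You instead run the paper's $K_2$ argument on the whole complement of the layer. That is legitimate because $w_\e^{-1}\le N^{-2}$ there, and it is slightly simpler. For the energy-norm rate the extra split buys nothing.

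Two small remarks. First, what you flag as the main obstacle is not one. \eqref{eq:wapriori} already gives $\e\|(U^{BL}_+)'w_\e\|_{L^2(I)}\le C$, and $\e(\breve u^{BL}_+)'w_\e$ is explicitly bounded, so no integration by parts or interpolation of $w_\e v$ is needed.

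Second, your slope bound $2CN^{-2}/h_i\le CN^{-1}$ is false on the opposite fine region $[-1,-1+\lambda]$, where $h_i=4\lambda/N$ is much smaller than $N^{-1}$. Use the elementwise stability $|I^Nv|_{H^1(I_i)}\le|v|_{H^1(I_i)}$ instead. Combined with the bound you already derived, it gives $\e\|(I^Nv)'\|_{L^2(-1,1-\lambda)}\le CN^{-2}$, which is what the paper does on $K_2$.
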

\begin{proof}
	We show \eqref{eq:ShiskBd} by constructing a CpwL interpolant of $u_\e$ which satisfies the bound \eqref{eq:ShiskBd}. 
	By $\e$-uniform quasioptimality of the Finite Element projection in the energy norm,
	\eqref{eq:ShiskBd} will follow.
	
	Proposition \ref{prop:Reg1} gives the decomposition \eqref{eq:decomp} for $u_{\e}$.
	With \eqref{eq:decomp},
	we write the approximation, with obvious notation, as
	$$
	u_{\e}^{\Delta_S} = u_0^{\Delta_S} + u^{BL, \Delta_S}_{\e,-} + u^{BL,\Delta_S}_{\e,+}
	+ u_{\e}^{R,\Delta_S}
	\;.
	$$
	Using the triangle inequality
	we bound each error contribution, for $N \in [1, 1/\e]$.
	
	\medskip
	\noindent\textbf{Remainder $u_\e^R$.}
	By \eqref{eq:u0H2} for $k=1$,
	in the range $\e \leq 1/N \leq 1$
	the remainder satisfies
	$\Vert u_{\e}^R \Vert_{1,\e,I} \leq C \e \leq C N^{-1}$. 
	Note that $ u_{\e}^R(\pm 1) = 0$ (see \eqref{rem}).
	We therefore may choose $u_{\e}^{R,\Delta_S} = 0$.
	
	\medskip
	\noindent\textbf{Smooth part $u_0$.}
	For $k=1$, the smooth part
	$u_0 \in H^1(I)$ is given by \eqref{u0}.
	We have (cf. \eqref{eq:u0H2})
	with a continuous, piecewise linear
	interpolant $u_0^{\Delta_S}$ that
	\begin{equation}\label{eq:rate_smooth}
		\Vert u_0 - u_0^{\Delta_S} \Vert_{L^2(I)} + \e \Vert (u_0 - u_0^{\Delta_S})' \Vert_{L^2(I)}
		\leq C (N^{-1} +\e)
		\leq C N^{-1},
	\end{equation}
	with $C>0$ a constant independent of $\e$.
	Here we used that $0 < \e \leq 1/N \leq 1$.
	Additionally,
	$(u_0 - u_0^{\Delta_S})(\pm1) = 0$.
	
	\medskip
	\noindent\textbf{Boundary layer terms $u^{BL}_{\e,\pm}$.}
	By symmetry, we will only consider the layer at the left endpoint $x=-1$.
	We adapt the approach taken in \cite{SunStynes95} to our low regularity setting. 
	
	In the following we denote by $\mathcal{I} (u^{BL}_{\e,-})$ the CpwL nodal interpolant of $u^{BL}_{\e,-}$ on the Shishkin mesh. 
	We set
	\begin{equation*}
		i_0 := N/4, \qquad i_{1} := \max \{ i : I_{i} \cap (-1 + \lambda, -1 + \lambda_{\e}) \neq \emptyset \},
	\end{equation*}
	and
	\begin{equation*}
		K_1 := [x_0,x_{i_0}], \qquad K_2 := [x_{i_0}, x_{i_1}], \qquad  K_3 := [x_{i_1}, x_{N}].
	\end{equation*}
	Then, $\overline{I} = K_1 \cup K_2 \cup K_3$.
	We estimate the error contributions from each of these sub-regions:
	
	On $K_1$, 
        using classical bounds for the interpolation error
        (e.g. \cite[Prop.~1.5]{ErnGuermond2004}) 
        and the bound on the interval lengths \eqref{eq:mesh-lengths-fine}, 
        we have for $\ell=0,1$,
	\begin{align*}
		\| \left(u_{\e, -}^{BL} - \mathcal{I}(u_{\e, -}^{BL})\right)^{(\ell)} \|_{L^2(K_1)}^2
		&= \sum_{i=1}^{i_{0}} \|\left(u_{\e, -}^{BL} - \mathcal{I}(u_{\e, -}^{BL})\right)^{(\ell)}\|_{L^2(I_{i})}^2\\
		&\leq C \sum_{i=1}^{i_{0}} h_{i}^{2(2-\ell)} \|\left(u_{\e, -}^{BL}\right)''\|_{L^2(I_{i})}^2\\
		&\leq C \, (\e N^{-1} \ln(N))^{2(2-\ell)} \|\left(u_{\e, -}^{BL}\right)''\|_{L^2([-1, -1 + \lambda])}^2.
	\end{align*}
	
	Moreover, by the a-priori estimate \eqref{eq:uBLw} and the fact that $w_{\e}^{-} \ge 1$ on $\overline{I}$, we have $\| \left(u_{\e, -}^{BL}\right)'' \|_{L^2([-1, -1 + \lambda])}^2 \leq \| \left(u_{\e, -}^{BL}\right)'' \|_{L_{w_{\e}}^2([-1, -1 + \lambda])}^2 \leq C \e^{-4}$. Hence, we obtain that
	\begin{equation*}
		\| \left(u_{\e, -}^{BL} - \mathcal{I}(u_{\e, -}^{BL})\right)^{(\ell)} \|_{L^2(K_1)}^2 \leq C \e^{-4} (\e N^{-1} \ln(N))^{2(2-\ell)} \leq C \e^{-2\ell} (N^{-1} \ln(N))^{2(2-\ell)}.
	\end{equation*}
	Combining the two estimates yields
	\begin{equation}
		\label{eq:interp-err-bound-K1}
		\| u^{BL}_{\e,-} - \mathcal{I}(u^{BL}_{\e,-}) \|_{1,\e,K_1}
		%\le C \bigl((N^{-1} \ln(N))^{2} + (N^{-1} \ln(N))^{4} \bigr)^{1/2} 
		\le C N^{-1} \ln(N).
	\end{equation}
	
	On $K_2$, we instead use the pointwise bound \eqref{eq:uBLpwBound} to control the interpolation error in the $L^2$-norm, while the $H^1$-stability of $\mathcal{I}$ (see, e.g., \cite[Prop.~1.4]{ErnGuermond2004}) yields a bound for the interpolation error in the $H^1$-seminorm.
	Indeed,
	\begin{align*}
		\| \left(u^{BL}_{\e,-} - \mathcal{I}(u^{BL}_{\e,-})\right)' \|_{L^2(K_2)}^2 &= \sum_{i=i_0}^{i_1} \| \left(u^{BL}_{\e,-} - \mathcal{I}(u^{BL}_{\e,-})\right)' \|_{L^2(I_{i})}^2\\
		&\leq C \sum_{i=i_0}^{i_1} \| \left(u^{BL}_{\e,-}\right)' \|_{L^2(I_{i})}^2 \le C \| \left(u^{BL}_{\e,-}\right)' \|_{L^2([x_{i_0}, x_{i_1}])}^2.
	\end{align*}
	Then, using that $x \mapsto (w_{\e}^{-}(x))^{-2}$ is monotonically decreasing on $\mathbb{R}$ and the a priori estimate \eqref{eq:wapriori}, we estimate
	\begin{align*}
		\| \left(u^{BL}_{\e,-}\right)' \|_{L^2([x_{i_0}, x_{i_1}])}^2 &= \int_{x_{i_0}}^{x_{i_1}} \left(\left(u^{BL}_{\e,-}(x)\right)'\right)^2 (w_{\e}^{-}(x))^2 (w_{\e}^{-}(x))^{-2} dx\\
		&\le \max_{x \in [x_{i_0}, x_{i_1}]} (w_{\e}^{-}(x))^{-2}  \| \left(u^{BL}_{\e,-}\right)' \|_{L_{w_{\e}}^2([x_{i_0}, x_{i_1}])}^2\\
		&\le C \e^{-2} (w_{\e}^{-}(x_{i_0}))^{-2}.
	\end{align*}
	From the definition of the transition point 
        $x_{i_0} = -1 + \lambda$ (cf. \eqref{eq:trpt} with \eqref{eq:theta}), 
        it follows that
	\begin{equation*}
	(w_{\e}^{-}(x_{i_0}))^{-2} 
         = \exp\left(- 2 \frac{-1 + x_{i_0}}{\e \beta_*}\right) 
         = \exp\left(- 2 \frac{\theta \e \ln N}{\e \beta_*}\right) 
         = N^{-4}.
	\end{equation*}
	For the other term in the $1,\e$-norm we estimate,
	\begin{align*}
	\| u^{BL}_{\e,-} - \mathcal{I}(u^{BL}_{\e,-}) \|_{L^2(K_2)}^2 
       &= \sum_{i=i_0}^{i_1} \| u^{BL}_{\e,-} - \mathcal{I}(u^{BL}_{\e,-}) \|_{L^2(I_{i})}^2 
       \le \sum_{i=i_0}^{i_1} | I_i | \| u^{BL}_{\e,-} - \mathcal{I}(u^{BL}_{\e,-}) \|_{L^\infty(I_{i})}^2
       \\
       &\le C \sum_{i=i_0}^{i_1} | I_i | \| u^{BL}_{\e,-} \|_{L^\infty(I_{i})}^2
       \\
       &\le C\, {|K_2|}\| u^{BL}_{\e,-} \|_{L^{\infty}([x_{i_0}, x_{i_1}])}^2.
       \end{align*}
       Again, using the definition of the transition point 
       together with the pointwise bound \eqref{eq:uBLpwBound}, 
       we obtain
	\[
	\| u^{BL}_{\e,-} \|_{L^{\infty}([x_{i_0}, x_{i_1}])}^2 
        \le \frac{C}{(w_{\e}^{-}(x_{i_0}))^{2}} \le C N^{-4}.
	\]
	Combining the two estimates yields with $|K_2| \le 1$,
	\begin{equation}
	\label{eq:interp-err-bound-K2}
	\| u^{BL}_{\e,-} - \mathcal{I}(u^{BL}_{\e,-}) \|_{1,\e,K_2}^2 
        \le C (N^{-4} + |K_2| N^{-4}) \le C N^{-4}.
	\end{equation}
	
	On $K_3$, we proceed as for $K_1$, 
        but use the bound on the coarse interval lengths \eqref{eq:mesh-lengths-coarse}. 
        Hence, for $\ell=0,1$,
	\begin{align*}
	\| \left(u_{\e, -}^{BL} - \mathcal{I}(u_{\e, -}^{BL})\right)^{(\ell)} \|_{L^2(K_3)}^2
	&= \sum_{i=i_1}^{N} \| \left(u_{\e, -}^{BL} - \mathcal{I}(u_{\e, -}^{BL})\right)^{(\ell)} \|_{L^2(I_{i})}^2\\
	&\leq C \sum_{i=i_1}^{N} h_{i}^{2(2-\ell)} \| \left(u_{\e, -}^{BL}\right)'' \|_{L^2(I_{i})}^2\\
	&\leq C  N^{-2(2-\ell)} \sum_{i=i_1}^{N} \| \left(u_{\e, -}^{BL}\right)'' \|_{L^2(I_{i})}^2\\
	&\leq  C  N^{-2(2-\ell)} \| \left(u_{\e, -}^{BL}\right)'' \|_{L^2([x_{i_1}, x_{N}])}^2.
	\end{align*}
	To bound $\| \left(u_{\e, -}^{BL}\right)'' \|_{L^2([x_{i_1}, x_{N}])}^2$,  
        we use that $[x_{i_1}, x_{N}] \subset [-1 + \lambda_{\e},1]$ and the a priori bound \eqref{eq:uBLw}. 
	Proceeding similarly as on $K_2$ yields
	\begin{align*}
		\| \left(u_{\e, -}^{BL}\right)'' \|_{L^2([-1 + \lambda_{\e},1])}^2 &\le \max_{x \in [-1 + \lambda_{\e},1]} (w_{\e}^{-}(x))^{-2}  \| \left(u^{BL}_{\e,-}\right)'' \|_{L_{w_{\e}}^2([-1+\lambda_{\e},1])}^2\\
		&\le C \e^{-4} (w_{\e}^{-}(-1 + \lambda_{\e}))^{-2} \le C,
	\end{align*}
	and so
	\begin{equation}
	\label{eq:interp-err-bound-K3}
	\| u_{\e, -}^{BL} - \mathcal{I}(u_{\e, -}^{BL}) \|_{1,\e,K_3}^2 \le C (\e^2 N^{-2} + N^{-4}).
	\end{equation}
	
	Combining the estimates 
        \eqref{eq:interp-err-bound-K1}, \eqref{eq:interp-err-bound-K2},  
        and \eqref{eq:interp-err-bound-K3} yields
	\begin{align*}
		\|u_{\e, -}^{BL} - \mathcal{I}(u_{\e, -}^{BL})\|_{1,\e,I} 
                \leq C (N^{-1} \ln(N) + \e N^{-1} + N^{-2}) \le C N^{-1} \ln(N).
	\end{align*}
	Letting $u^{BL, \Delta_S}_{\e,\pm} := \mathcal{I}(u^{BL}_{\e,\pm})$,
	we finally observe that the CpwL approximant
	\[
	u_0^{\Delta_S} + u^{BL,\Delta_S}_{\e,-} + u^{BL,\Delta_S}_{\e,+} \in  S^1(I,\Delta_S)
	\]
	satisfies the zero BCs at $\pm 1$, by construction, as well as the bound \eqref{eq:ShiskBd}.
	Combining the above, the proof is complete.
\end{proof}
\begin{remark}\label{rmk:Shk=2}
When $k=2$, i.e. for $b,f \in H^2(I)$, 
the bound (\ref{eq:rate_smooth}) 
holds for the approximation error in the remainder, 
$u_{\e}^{R} - u_{\e}^{R,\Delta_S}$, as well. 
Indeed, from (\ref{eq:u0H2}), 
the smooth part $u_0$ belongs to $H^2(I)$ and 
the remainder $u_\e^R$ satisfies
$$
\Vert u_{\e}^{R} \Vert_{L^2(I)}  \leq \Vert u_{\e}^{R} \Vert_{1, \e, I} \leq C \e^2.
$$
Also, from (\ref{rem}) we have 
$$
-\e^2 (u_\e^R)'' + b u_\e^R = \e^2 u_0'' \; , \; u_\e^R(\pm 1) = 0,
$$
thus
$$
\Vert (u_\e^R)'' \Vert_{L^2(I)} \leq \Vert u_0'' \Vert_{L^2(I)} +  \e^{-2}\Vert b u_\e^R \Vert_{L^2(I)} \leq C,
$$
since $b$ is bounded. 
This means $u_\e^R \in H^2(I)$, 
and by classical results the continuous, piecewise linear interpolation
approximation on a quasi-uniform mesh $u_\e^{R,\Delta}$, 
satisfies 
\begin{eqnarray*}
\Vert u_\e^R - u_\e^{R,\Delta} \Vert_{L^2(I)} &\leq& C N^{-2} \Vert (u_{\e}^{R})'' \Vert_{L^2(I)} , 
\\
\Vert (u_\e^R - u_\e^{R,\Delta})' \Vert_{L^2(I)} &\leq& C N^{-1} \Vert (u_{\e}^{R})'' \Vert_{L^2(I)}.
\end{eqnarray*}
Therefore, for $1\leq N \leq \e^{-1}$, 
\begin{eqnarray*}
\Vert u_\e^R - u_\e^{R,\Delta} \Vert_{1,\e,I} 
&\leq& \Vert u_\e^R - u_\e^{R,\Delta} \Vert_{L^2(I)} + \e \Vert (u_\e^R - u_\e^{R,\Delta})' \Vert_{L^2(I)} 
\\
&\leq& C ( N^{-2} + \e N^{-1} )  \leq C N^{-2} \;,
\end{eqnarray*}
which is the optimal $L^2(I)$-rate.

For \emph{any $N\geq 1$ and any} $\e\in (0,1] $
(comprising in particular $\e=1$), 
one has
\begin{eqnarray*}
\Vert u_\e^R - u_\e^{R,\Delta} \Vert_{1,\e,I}
&\leq& C N^{-1}
\end{eqnarray*}
which is the optimal $H^1(I)$ rate.
\end{remark}
%}
%%%%%%%%%%%%%%%%%%%%%%%%%%%%%%%%%%%%%%%%%%%%%%%%%%%%%%%%
\subsubsection{Exponential mesh $\Delta_{eXp}$}
%%%%%%%%%%%%%%%%%%%%%%%%%%%%%%%%%%%%%%%%%%%%%%%%%%%%%%%%
In \cite{XenoThesis} a class of  meshes $\Delta_{eXp}$ 
was derived for the robust, asymptotically optimal, approximation of the
explicit boundary layer function $e^{-\beta x/ \e}, \beta \in \mathbb{R}^+$, 
appearing in \eqref{eq:uBLexp}. 
The mesh points of $\Delta_{eXp}$ 
are ``exponentially graded'' towards the endpoints, in an $\e$-dependent way.
In \cite{FranzCAX2018} it was shown that 
the exponential mesh $\Delta_{eXp}$ may be 
regarded as a generalized S-type mesh with $\alpha=1/2$,
and
\begin{equation}\label{eq:phieXp}
\phi(t) = - \ln\left( 1 - 2t (1-2N^{-1}) \right) \; , \; \psi(t) = 1 - 2t(1-2N^{-1}),
\end{equation}
with  $\max|\psi' | \leq 2$.
For $\theta > 0$ fixed, the $N+1$ nodal points are given by
	\[
	x_i = 
	\begin{cases}
		-1 + \e \theta \phi\left(\frac{2i}{N}\right), & i=0,\dots,N/4\;,\\
		\frac{4i}{N}(1 - \theta \e \ln(N/2)) - 2 + 2 \theta \e \ln(N/2) & i=N/4,\dots,3N/4\;,\\
		1 - \e \theta \phi\left(2 - \frac{2i}{N}\right) & i=3N/4,\dots,N\;.
	\end{cases}
	\]
	Here, the mesh size $h_i = | I_{i}| =  x_{i} - x_{i-1}$ of $I_{i} \subset I_{{\text{layer}}}^{-}$, for $i=1,\dots,N/4$, satisfies
	\begin{align}
		\nonumber h_i = x_i - x_{i-1} &= \e \theta \left[\phi\left(\frac{2i}{N}\right) -  \phi\left(\frac{2(i-1)}{N}\right) \right]\\
		\nonumber&= \e \theta \left[- \ln\left( 1 - 2 \left(\frac{2i}{N}\right) (1-2N^{-1}) \right) + \ln\left( 1 - 2\left(\frac{2(i-1)}{N}\right) (1-2N^{-1}) \right)\right]\\
		\nonumber&= \e \theta \ln\left(\frac{1 - 2\left(\frac{2(i-1)}{N}\right) (1-2N^{-1})}{1 - 2 \left(\frac{2i}{N}\right) (1-2N^{-1})} \right)\\
		&\label{eq:hj_exp_psi}= \e \theta \ln\left(\frac{\psi\left(\frac{2(i-1)}{N}\right)}{\psi \left(\frac{2(i-1)}{N}\right) - \delta} \right)\;,
	\end{align}
where $\delta = \frac{4}{N} (1 - 2 N^{-1})$.
%{\color{magenta}Note that in the left layer region, i.e.
%$t \in [0,1/2-1/N]$, there holds
%$$
%x_j = -1 + 2\e \phi(j/N) \; , \; j=0, 1, \dots, m=\frac{N}{2}-1,
%$$
%and in particular
%%
%$$
%x_m = -1 + 2\e \phi(1/2 - 1/N).
%$$
%We have using a Taylor expansion
%$$
%\phi(1/2 - 1/N) = -\ln ( 1 - 4/N + 4/N^2) = \frac{4}{N} + O(1/N^2),
%$$
%and thus
%$$
%x_m \approx \frac{4\e}{N} + O(\e/N^2).
%$$
%The mesh size is the first interval is given by
%%
%$$
%h_1 = x_1 - x_0 = -1+2\e \phi(1/N) \approx \frac{2\e}{N},
%$$
%from which it follows that $h_i =O(\e/N),i=1,\ldots,m$, in the left layer region.
%The same holds for the right layer region.
%}

We next adapt the arguments of \cite{XenoThesis,FranzCAX2018} 
to the approximation of the 
boundary layers $u^{BL}_{\e,\pm}$ obtained in Prop.~\ref{prop:Reg1},
using the weighted $H^2$ estimates \eqref{eq:uBLw} 
rather than the explicit, pointwise representation \eqref{eq:uBLexp} 
(which are valid for constant reaction coefficient) 
or pointwise bounds for the second derivatives as e.g. \cite{FranzCAX2018}.
\begin{proposition}
\label{prop:eXp}
For $0<\e\leq 1$ and $b,f \in H^1(I)$,
let $u_{\e}$ be the solution to \eqref{eq:de}--\eqref{eq:bc}.
Let $u_{\e}^{\Delta_{eXp}}\in \mathcal{S}^1_0(\Delta_{eXp})$ be its  
CpwL FE approximation on the eXp mesh $\Delta_{eXp}$ 
at length scale $\e\in (0,1]$ with $1\leq N \leq 1/\e$ intervals.

Then there exists a positive constant $C$, independent of $\e$, 
such that for every $1\leq N \leq 1/\e$ holds
\begin{equation}\label{eq:eXpBd}
\Vert u_{\e} - u_{\e}^{\Delta_{eXp}} \Vert_{1,\e,I} \leq C N^{-1}.
\end{equation}
\end{proposition}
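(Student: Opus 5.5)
The proof will follow the structure of the proof of Prop.~\ref{prop:shish}. By the $\e$-uniform quasioptimality \eqref{eq:best_approx}, it suffices to build a CpwL function in $\mathcal{S}^1_0(\Delta_{eXp})$ achieving \eqref{eq:eXpBd}. We use the decomposition \eqref{eq:decomp} of Prop.~\ref{prop:Reg1} with $k=1$ and treat each term as before.

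For the remainder we take $u^{R,\Delta}_\e = 0$; then \eqref{eq:u0H2} gives $\|u^R_\e\|_{1,\e,I}\le C\e\le CN^{-1}$. For $u_0\in H^1(I)$ we use the nodal interpolant. Its $L^2$ error on each element is bounded by $h_i\|u_0'\|_{L^2(I_i)}$, and the derivative of the error by $2\|u_0'\|_{L^2(I_i)}$. All $h_i\le CN^{-1}$, including the layer cells, where $h_i\le C\e\le CN^{-1}$. This gives \eqref{eq:rate_smooth} with $CN^{-1}$, without a logarithm. The real work is therefore the boundary layers, and by symmetry we consider only $u^{BL}_{\e,-}$ with its nodal interpolant $\mathcal{I}$.

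\emph{Layer region $[x_0,x_{N/4}]$.} Here the plan is to exploit the grading through the weight instead of a uniform $h_i$. On each cell $I_i$,
\[
\e^2\|(u-\mathcal{I}u)'\|^2_{L^2(I_i)}+\|u-\mathcal{I}u\|^2_{L^2(I_i)}\le C(\e^2h_i^2+h_i^4)\,\max_{I_i}(w^-_\e)^{-2}\,\|u''\|^2_{L^2(I_i;w^-_\e)}.
\]
From \eqref{eq:hj_exp_psi}, with $t_{i-1}=2(i-1)/N$, we have $h_i=\e\theta\ln\bigl(\psi(t_{i-1})/(\psi(t_{i-1})-\delta)\bigr)$. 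As long as $\psi(t_i)\gtrsim\delta$, this yields $h_i\le C\e\theta\,\delta/\psi(t_i)\le C\e N^{-1}\psi(t_i)^{-1}$. On the other hand, $\max_{I_i}(w^-_\e)^{-1}=\exp\bigl(-(x_{i-1}+1)/(\e\beta_*)\bigr)=\psi(t_{i-1})^{\theta/\beta_*}$. Choosing $\theta\ge\beta_*$ (e.g.\ \eqref{eq:theta}) makes the product $h_i\max_{I_i}(w^-_\e)^{-1}\le C\e N^{-1}\psi(t_{i-1})^{\theta/\beta_*}/\psi(t_i)$ bounded by $C\e N^{-1}$, using $\psi(t_{i-1})/\psi(t_i)\le C$ away from the last cell. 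Summing over $i$ and using \eqref{eq:uBLw} in the form $\sum_i\|u''\|^2_{L^2(I_i;w_\e)}\le C\e^{-4}$ gives $\e^2(\e N^{-1})^2\e^{-4}=N^{-2}$ for the derivative part, and a smaller contribution for the $L^2$ part. This is the analogue of \eqref{eq:interp-err-bound-K1}, now without $\ln N$.

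\emph{Last layer cell.} The cell adjacent to the transition point, where $\psi(t_i)=2/N\sim\delta$, must be handled separately, as in $K_2$ of the proof of Prop.~\ref{prop:shish}. There $(w^-_\e)^{-1}\le (N/2)^{-\theta/\beta_*}\le CN^{-2}$. We combine the $H^1$-stability of $\mathcal{I}$ with the weighted bound \eqref{eq:wapriori} to control $\e\|(u^{BL}_{\e,-})'\|$, and the pointwise bound \eqref{eq:uBLpwBound} to control the $L^2$ part. This gives a contribution $O(N^{-2})$.

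\emph{Coarse region.} Here $h_i\le CN^{-1}$, and beyond the transition point $-1+\theta\e\ln(N/2)$ the function $u^{BL}_{\e,-}$ is exponentially small. We repeat the $K_2$/$K_3$ argument: use the pointwise bound and $H^1$-stability near the transition point, and the weighted $H^2$ bound \eqref{eq:uBLw} with $(w^-_\e)^{-2}\le\e^4$ beyond $-1+\lambda_\e$. This yields $O(N^{-2}+\e N^{-1})$. Adding all contributions gives \eqref{eq:eXpBd}.

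The main obstacle is the layer-region estimate: matching the cell size from \eqref{eq:hj_exp_psi} against the local weight $(w^-_\e)^{-1}$ uniformly in $i$, without losing a $\ln N$ factor. In particular, the ratio $\psi(t_{i-1})/\psi(t_i)$ and the degenerate final cell, where the logarithm in \eqref{eq:hj_exp_psi} blows up relative to $\delta/\psi$, need care. I would first try the crude bound $\ln(1+s)\le s$ and verify that the choice $\theta=2\beta_*$ leaves enough slack to absorb these bounded ratios.
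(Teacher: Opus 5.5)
Your proposal is correct and follows the paper's proof essentially step by step: the same decomposition with zero approximant for $u^R_\e$, nodal interpolation of $u_0$ and of the layers, the weight factor $\max_{I_i}(w^-_\e)^{-2}=\psi(t_{i-1})^{2\theta/\beta_*}$ on the layer cells, and the Shishkin $K_2/K_3$ argument beyond the transition point; the only difference is that you bound $h_i$ by the elementary inequality $\ln(1+s)\le s$, whereas the paper maximizes $g_\ell(y)=y^4\ln\bigl(y/(y-\delta)\bigr)^{2(2-\ell)}$ over $y\in[(6N-8)/N^2,1]$ by calculus. Your separate treatment of the last layer cell is unnecessary, since $\psi(t_{i-1})/\psi(t_i)=1+\delta/\psi(t_i)\le 3$ for every $i\le N/4$ (because $\psi(t_i)\ge 2/N\ge\delta/2$), so your uniform estimate already covers that cell; also, on that cell $\max(w^-_\e)^{-1}=\psi(t_{N/4-1})^{2}\le 36N^{-2}$ for $\theta=2\beta_*$, not $(N/2)^{-2}$, which is only its value at the transition point.
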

\begin{proof}
As in the proof of Proposition~\ref{prop:shish}, 
we show the statement by constructing a CpwL interpolant of $u_{\e}$ and 
use the quasioptimality of the Finite Element projection in the energy norm to conclude.
We use the decomposition \eqref{eq:decomp} and write the approximation as
\[
u_{\e}^{\Delta_{eXp}} = u_0^{\Delta_{eXp}} + u^{BL, \Delta_{eXp}}_{\e,-} + u^{BL,\Delta_{eXp}}_{\e,+} + u_{\e}^{R,\Delta_{eXp}}\;.
\]

\medskip
\noindent\textbf{Boundary layer terms $u^{BL}_{\e,\pm}$.}
By symmetry, we only analyze the error for the layer at the left endpoint. 
As in the case of Proposition \ref{prop:shish}, we choose $\theta = 2 \beta_{*}$. 
For each $i=1,\dots,N/4$, it holds that
\begin{eqnarray*}
\Vert  \left( u^{BL}_{\e,-} \right)'' \Vert^{2}_{L^2 (I_{i})} 
&=& \int_{x_{i-1}}^{x_i} \left( \left( u^{BL}_{\e,-} (x) \right)'' \right)^2  dx \\
&\leq& \frac{1}{\min_{x \in I_i } \left( w_{\e}^{-} (x) \right)^2}  \int_{x_{i-1}}^{x_{i}} \left( \left( u^{BL}_{\e,-} (x)\right)'' \right)^2  \left( w_{\e}^{-} (x) \right)^2 dx \\
&=& \frac{1}{\left( w_{\e}^{-} (x_{i-1}) \right)^2} \Vert  \left( u^{BL}_{\e,-} \right)'' \Vert^{2}_{L^2_{w_{\e}}(I_{i})} \\
&=& \psi\left(\frac{2(i-1)}{N}\right)^4 \Vert  \left( u^{BL}_{\e,-} \right)'' \Vert^{2}_{L^2_{w_{\e}}(I_{i})}\;,
\end{eqnarray*}
where we have used the monotonicity of $w_{\e}^{-}$.

%The Bramble-Hilbert Lemma (see, e.g., \cite[Section 4.4]{BrennerScott2008}) for $k=0,1$
In the following we denote by $\mathcal{I}(u^{BL}_{\e,-})$ the CpwL nodal interpolant of $u^{BL}_{\e,-}$ on the eXp mesh. 
Using classical interpolation error estimates 
and \eqref{eq:hj_exp_psi}, we obtain, for $\ell=0,1$,
\begin{align*}
&\Vert  \left( u^{BL}_{\e,-} \right)^{(\ell)} - \left( \mathcal{I}(u^{BL}_{\e,-}) \right)^{(\ell)}\Vert^{2}_{L^2 (I_{{\text{layer}}}^{-})} \leq C \sum_{i=1}^{N/4} h_i^{2(2-\ell)} \Vert  \left( u^{BL}_{\e,-} \right)'' \Vert^{2}_{L^2 (I_{i})}\\
&\leq C \sum_{i=1}^{N/4} \left(\e \theta \ln\left(\frac{\psi\left(\frac{2(i-1)}{N}\right)}{\psi \left(\frac{2(i-1)}{N}\right)- \delta} \right)\right)^{2(2-\ell)} \psi\left(\frac{2(i-1)}{N}\right)^{4} \Vert  \left( u^{BL}_{\e,-} \right)'' \Vert^{2}_{L^2_{w_{\e}}(I_{i})}\\
&\leq C \e^{2(2-\ell)} \max_{i=1,\dots,N/4} \left(\psi\left(\frac{2(i-1)}{N}\right)^{4} \ln\left(\frac{\psi\left(\frac{2(i-1)}{N}\right)}{\psi \left(\frac{2(i-1)}{N}\right) - \delta} \right)^{2(2-\ell)} \right) \sum_{i=1}^{N/4} \Vert  \left( u^{BL}_{\e,-} \right)'' \Vert^{2}_{L^2_{w_{\e}}(I_{i})}\\
&\leq C \e^{2(2-\ell)} \max_{i=1,\dots,N/4} \left(\psi\left(\frac{2(i-1)}{N}\right)^{4} \ln\left(\frac{\psi\left(\frac{2(i-1)}{N}\right)}{\psi \left(\frac{2(i-1)}{N}\right) - \delta} \right)^{2(2-\ell)} \right) \Vert  \left( u^{BL}_{\e,-} \right)'' \Vert^{2}_{L^2_{w_{\e}}(I)}\;.
\end{align*}

We now claim that there exists $C>0$ independent of $N, \e$ such that
\begin{equation}\label{eq:claim}
\max_{i=1,\dots,N/4} \left(\psi\left(\frac{2(i-1)}{N}\right)^{4} \ln\left(\frac{\psi\left(\frac{2(i-1)}{N}\right)}{\psi \left(\frac{2(i-1)}{N}\right) - \delta} \right)^{2(2-\ell)} \right) \leq C N^{-2(2-\ell)}.
\end{equation}
Indeed, for $i=1,\dots,N/4$, we have that 
\[
\frac{6N - 8}{N^2} \leq \psi\left(\frac{2(i-1)}{N}\right) \leq 1\;.
\]
Consider the map 
\[
g_{\ell} \colon y \mapsto  y^4 \ln\left(\frac{y}{y - \delta}\right)^{2(2-\ell)}, \quad y \in \left[\frac{6N - 8}{N^2} , 1\right]\;.
\]
Then, for $\ell=0$,  
we note that finding the maximum of $g_{0}$ in $\left[\frac{6N - 8}{N^2} , 1\right]$ 
is equivalent to finding the maximum of $h \colon y \mapsto y \ln\left(\frac{y}{y - \delta}\right)$. 
We have that
\[
h'(y) = \ln\left(1 + \frac{\delta}{y - \delta}\right) - \frac{\delta}{y - \delta} < 0,\quad \text{for } y \in \left[\frac{6N - 8}{N^2} , 1\right]\;,
\]
where we used that for $t > -1$, $\ln(1 + t) < t$. 
Therefore, $g_{0}$ attains its maximum at the left endpoint. 
Moreover, $\frac{6N - 8}{N^2} - \delta = \frac{2}{N}$, and hence
\[
\frac{6N - 8}{N^2} \ln\left(3 - \frac{4}{N}\right) \leq \frac{6}{N} \ln(3)\;.
\]
This shows the claim in the case $\ell=0$.

For $\ell=1$, we have that
\[
g_{1}'(y) = y^3 \ln\left(\frac{y}{y - \delta}\right) \left[4 \ln\left(\frac{y}{y - \delta}\right) - \frac{2 \delta}{y - \delta}  \right]\;.
\]
The sign of $g_{1}'(y)$ is determined by
\[
\Phi(y) = 2 \ln\left(\frac{y}{y - \delta}\right) - \frac{\delta}{y - \delta} = 2\ln(1+u) - u\;,
\]
for $u = \frac{\delta}{y - \delta} > 0$. From this we obtain that $\Phi(y) \geq 0$, for all $y \in \left[\frac{6N - 8}{N^2} , 1\right]$. 
It follows that $g_{1}$ is monotonically increasing and attains its maximum at $y=1$. Finally, using that $- \ln(1 - z) \leq \frac{z}{1 - z}$ for $0 \leq z < 1$ and
$\frac{4}{N} - \frac{8}{N^2} \leq \frac{1}{2}$, $N \geq 4$, we find
\[
- \ln\left(1 - \frac{4}{N} + \frac{8}{N^2}\right) \leq \frac{\frac{4}{N} - \frac{8}{N^2}}{1 - \frac{4}{N} + \frac{8}{N^2}} \leq 2 \left(\frac{4}{N} - \frac{8}{N^2} \right)\leq \frac{8}{N}\;,
\]
and the claim follows. 
This shows the result for $u_{\e,-}^{BL}$ on $I_{\text{layer}}^{-}$. 

On $\tilde{I} = I \setminus I_{\text{layer}}^{-}$, the 
same argument as in the proof of Proposition \ref{prop:shish} can be used.

\medskip
\noindent\textbf{Smooth part $u_0$ and remainder $u_{\e}^R$.}
As in Proposition \ref{prop:shish}, we choose the approximation of the remainder, 
$u_{\e}^{R,\Delta_{eXp}}$, to be zero.

We next consider the smooth part $u_0$. 
Let $u_0^{\Delta_{eXp}}$ denote the CpwL interpolant of $u_0$ on the eXp mesh.
On $I_{\mathrm{coarse}}$, arguing as in Proposition~\ref{prop:shish} and using in
particular \eqref{eq:rate_smooth}, we obtain
$$
\Vert u_0 - u_0^{\Delta_{{eXp}}} \Vert_{1,\e,I_{\text{coarse}}} 
\leq C N^{-1}.
$$
For the layer region, by symmetry it suffices to estimate the error on $I_{\mathrm{layer}}^-$. 
Using the classical $L^2$ interpolation error estimate
together with the stability of CpwL interpolation in the $H^1$-seminorm, 
we obtain
$$
\Vert u_0 - u_0^{\Delta_{{eXp}}} \Vert^2_{1,\e,I_{\text{layer}}^{-}} 
\leq 
C \sum_{i=1}^{N/4} \left( \e^2 \Vert  u'_{0} \Vert^2_{L^2(I_i)}+h_i^{2} \Vert u_{0}' \Vert^2_{L^2(I_i)} \right).
$$
Since by assumption $\e \leq N^{-1}$ and, by \eqref{eq:hj_exp}, 
$h_i \leq C N^{-1}$, for $i=1,\dots,N/4$, it follows that
\begin{eqnarray*}
\Vert u_0 - u_0^{\Delta_{{eXp}}} \Vert^2_{1,\e,I_{\text{layer}}^{-}}
 \leq C N^{-2} \sum_{i=1}^{N/4}   \Vert u_{0}' \Vert^2_{L^2(I_i)} 
 \leq C N^{-2},
\end{eqnarray*}
where \eqref{eq:u0H2} was used in the last step.

Finally, we mention that by construction the approximant satisfies the zero Dirichlet BCs.
\end{proof}
%%%%%%%%%%%%%%%%%%%%%%%%%%%%%%%%%%%%%%%%%%%%%%%%%%%%%%%%%%%
%%%%%%%%%%%%%%%%%%%%%%%%%%%%%%%%%%%%%%%%%%%%%%%%%%%%%%%%
\subsubsection{Bakhvalov-Shishkin mesh}
%%%%%%%%%%%%%%%%%%%%%%%%%%%%%%%%%%%%%%%%%%%%%%%%%%%%%%%%
In the original Bakhvalov mesh \cite{Bakhvalov1969} the mesh points are chosen by solving a non-linear problem. 
This results in a layer-adapted mesh with more nodal points in the layer regions, and usually far fewer points in the coarse region. A combination of the above two meshes, referred to as the Bakhvalov-Shishkin (B-S) mesh, includes a uniform subdivision in the coarse region and the (original) Bakhvalov nodal points in the layer regions (see, e.g., \cite{Linss1985}). For the B-S mesh, we have 
$$
\phi(t) = - \ln\left( 1 - 2t (1-N^{-1}) \right) \; , \; \psi(t) = 1 - 2t(1-N^{-1}),
$$
and there holds $\max|\psi'| = 2$. 
We denote this mesh as $\Delta_{BS}$.

In numerical experiments (see, e.g., \cite{FranzCAX2016}) the eXp mesh 
outperforms both the Shishkin and B-S meshes in the practical range of $N$ and $\e$.
%even though this is not obvious from the theory.
An analog of Proposition \ref{prop:eXp} holds for the B-S mesh (with an almost identical proof).
%For brevity, we omit it.
%
%%%%%%%%%%%%%%%%%%%%%%%%%%%%%%%%%%%%%%%%%%%%%%%%%%%%%%%%%%%
\section{Robust Neural Network Approximation}
\label{sec:relunn}
%%%%%%%%%%%%%%%%%%%%%%%%%%%%%%%%%%%%%%%%%%%%%%%%%%%%%%%%%%%
In this section, we consider the approximation of univariate functions on bounded intervals
by NNs with $\ReLU^k$ activation function
$\varrho: \R \to \R: x \mapsto \max\{0, x\}^k$, \; $k=1,2$.
The case $k=1$ corresponds to the classical ReLU activation.
It allows to exactly represent continuous, piecewise affine
functions on arbitrary partitions in $\R$.
Owing to $\ReLU$ NNs not allowing for exact multiplication 
of real numbers, higher degree polynomials and splines can 
not be exactly represented by $\ReLU$ NNs, motivating to
consider $k\geq 2$.
%%%%%%%%%%%%%%%%%%%%%%%%%%%%%%%%%%%%%%%%%%%%%%%%%%%%%%%%%%%
\subsection{Neural Networks}
\label{sec:nn}
%%%%%%%%%%%%%%%%%%%%%%%%%%%%%%%%%%%%%%%%%%%%%%%%%%%%%%%%%%%
As usual (e.g. \cite{OPS2020,OS2023,PV2018}), 
we define a neural network (NN) in terms of its 
weight matrices $A_\ell$ and bias vectors $b_\ell$.

\emph{
All NNs considered are feedforward NNs without skip connections, 
and we shall not indicate this at each occurrence. 
}
We distinguish between a 
NN configuration $\Phi$ of a feedforward NN 
and the function it realizes,
called \emph{realization} of the NN $\Phi$,
and denoted as $\realiz(\Phi)$ being
the composition of parameter-dependent affine transformations
and nonlinear activations.
We recall some NN formalism in the notation of
\cite[Section 2]{PV2018}.
%%%%%%%%%%%%%%%%%%%%%%%%%%%%%%%%%%%%%%%%%%%%%%%%%%%
\begin{definition}[{\cite[Definition 2.1]{PV2018}}]{[Neural Network Definitions]}
\label{def:NeuralNetworks}
\newline

\noindent
{1.}
For $d,L\in\N$, a \emph{neural network configuration $\Phi$} 
with input dimension $d \geq 1$ and number of layers $L\geq 1$, 
comprises 
a finite sequence of matrix-vector pairs, i.e.
\begin{align*}
\Phi = ((A_1,b_1),(A_2,b_2),\ldots,(A_L,b_L)).
\end{align*}

\noindent
{2.}
For $N_0 := d$ and \emph{numbers of neurons $N_1,\ldots,N_L\in\N$ per layer}, 
for all $\ell=1,\ldots, L$ it holds that
$A_\ell\in\R^{N_\ell \times N_{\ell-1} }$ and
$b_\ell\in\R^{N_\ell}$.

\noindent
{3.} [Realization]
For a NN $\Phi$ and an activation function $\varrho: \R \to \R$, 
we define the associated
\emph{realization of feedforward NN $\Phi$} 
as the function
\begin{align*}
\realiz(\Phi): \R^d\to\R^{N_L} : x \to x_L,
\end{align*}
where, for $L\geq 2$,
\begin{align*}
x_0 & := x,
\\
x_\ell & := \varrho( A_\ell x_{\ell-1} + b_\ell ),
\qquad\text{ for }\ell=1,\ldots,L-1,
\\
x_L & := A_L x_{L-1} + b_L.
\end{align*}

\noindent
{4.}[Activation]
The \emph{activation function} $\varrho$ 
is assumed to act componentwise on vector-valued inputs, 
$\varrho(y) = (\varrho(y_1), \dots, \varrho(y_m))$ 
for all $y = (y_1, \dots, y_m) \in \R^m$.

\noindent
{5.}
The layers indexed by $\ell=1,\ldots,L-1$ 
are \emph{hidden layers}.
In these layers the activation function is applied.
No activation is applied in the last 
layer $L$ of the NN.

\noindent
{6.} [Width, Depth, Size]
We refer to 
$\depth(\Phi) := L$ as the \emph{depth} of $\Phi$
and call 
$\size(\Phi) := \sum_{\ell=1}^L \norm[0]{A_\ell} + \norm[0]{b_\ell}$
the \emph{size} of $\Phi$,
which is the number of nonzero components 
in the weight matrices $A_\ell$ and the bias vectors $b_\ell$.
The \emph{number of neurons} in the NN $\Phi$ is denoted by $\#(\Phi)$,
and the \emph{ width} of the NN $\Phi$ of depth $L(\Phi)$ 
is $W(\Phi) = \max_{\ell=1,...,L(\Phi)} N_\ell$. 
Without additional assumptions on the sparsity of $b_\ell$ and $A_\ell$, 
$M(\Phi) \lesssim (\#(\Phi))^2$.

\noindent
{7.} [Input / Output Dimension]
The parameters 
$d, N_L \in \N$ are the \emph{input dimension} and the \emph{output dimension},
respectively.

\noindent
{8.}
The set of all such NNs is denoted by 
$\mathcal{NN}^\varrho_{L,W,d,N_L}$.
\end{definition}
\begin{remark}\label{rmk:NNsize}
Some related works, e.g. \cite{DLM2021}, 
use the width as a measure for the complexity of a NN.
In each layer of a fully connected NN
the number of nonzero weights can be width squared.
\end{remark}
\begin{remark}\label{rmk:StrctNN}
We refer to NNs with only activation function $\varrho$ 
as \emph{strict $\varrho$-NNs}, or simply as $\varrho$-NNs.
This includes NNs of depth $L = 1$, 
which do not have hidden layers and 
which exactly realize affine transformations.
\end{remark}
\begin{remark}\label{rmk:MultiActi}[Multiple Activations]
NNs $\Phi$ of depth $L=\depth(\Phi) \geq 2$ 
may admit a different activation $\rho_\ell$, $\ell = 1,...,L-1$ 
at each hidden layer. We will leverage this to 
include exponential boundary layers via $\tanh( )$ and 
sigmoidal activations into the feature space.
In the case of heterogeneous activations, 
we write 
$\bsvarrho := \{ \varrho_\ell: \ell =1,...,L-1\}$,
and denote the NN as $(\Phi,\bsvarrho)$,
with realization $\realiz(\Phi,\bsvarrho)$.
In emulation of boundary layers via $\tanh$-activations,
only parts of NN layers are activated with $\tanh$, 
while
the remaining parts are $\ReLU$ or $\ReLU^2$ activated.
We then collect these activations also in $\bsvarrho$, 
and denote with $\realiz(\Phi,\bsvarrho)$ the realization
obtained with activations distributed within the NN architectures
as will be clear from the context.
\end{remark}
%%%%%%%%%%%%%%%%%%%%%%%%%%%%%%%%%%%%%%%%%%%%%%%%%%%%%%%%%%%
\subsection{ReLU NN Emulation of continuous, piecewise polynomials}
\label{sec:ReLU CpwL}
%%%%%%%%%%%%%%%%%%%%%%%%%%%%%%%%%%%%%%%%%%%%%%%%%%%%%%%%%%%
A first (well-known) observation is that
on arbitrary, finite partitions $\Delta$ of the bounded interval $(a,b)$, 
the space ${\mathcal S}^1(\Delta)$ of continuous, piecewise linears can be 
exactly represented by shallow ReLU NNs.
We refer to  \cite[Prop.~3.4]{AHS23_2956} for a proof.
%%%%%%%%%%%%%%%%%%%%%%%%%%%%%%%%%%%%%%%%%%%%%%%%%%%%%%%%%%%%%%%%%%%%%%%%%
\begin{proposition}\label{prop:approx_pl_func_ReLUNN}
Let $N\in \mathbb{N}$, and 
$\Delta = \{ x_j \}_{j=0}^N$ a partition of $I$ into $N$ 
open, disjoint, connected subintervals
$I_j = (x_{j-1},x_j)$ of length $h_j = x_j - x_{j-1}$, $j=1,\ldots,N$,
with mesh width $h = \max_{j=1}^N h_j$.

For each $\phi_N \in \mathcal{S}^{1}(\Delta)$
there is a ReLU neural network
$\Phi_N\in \mathcal{NN}_{2,{N+1},1,1}$ 
such that
$\phi_N(t) = \realiz(\Phi_N)(t)$ for all $t\in I$.
Its weights are
bounded in absolute value by 
$\max \left\{1+h_1,\|\phi_N\|_{L^\infty(I)},2\displaystyle\max_{j=0,\dots,N}\frac{1}{h_j}\right\}$.
\end{proposition}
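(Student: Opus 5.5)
We write $\phi_N$ explicitly as a linear combination of ReLU ramps located at the nodes; the network realizing this sum is then read off directly. Let $s_j := (\phi_N(x_j)-\phi_N(x_{j-1}))/h_j$ be the slope of $\phi_N$ on $I_j$, $j=1,\dots,N$, and set $s_0:=0$. For $t\in I=(x_0,x_N)$ we claim
\[
\phi_N(t) = \phi_N(x_0) + s_1\,\varrho(t-x_0) + \sum_{j=1}^{N-1}(s_{j+1}-s_j)\,\varrho(t-x_j),
\]
with $\varrho=\ReLU$. Both sides are continuous and piecewise affine with breakpoints contained in $\Delta$, so it suffices to check that they agree at $x_0$ and have the same slope on each $I_k$. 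At $x_0$ every ramp vanishes. On $I_k$ the ramps with $j\le k-1$ are active and the rest vanish, so the slope of the right-hand side telescopes: $s_1+\sum_{j=1}^{k-1}(s_{j+1}-s_j)=s_k$. This is the standard induction over subintervals.

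The network is read off from this formula. It has depth $L=2$ and $N$ hidden neurons for the ramps $\varrho(t-x_j)$, $j=0,\dots,N-1$. To reach the stated width $N+1$, and to keep the bias magnitudes controlled, we add one more neuron $\varrho(t-x_0+1)$ or an equivalent one. On $I$ this neuron is affine and positive, so it can carry the constant $\phi_N(x_0)$ through the output layer. Thus $A_1=(1,\dots,1)^\top$, $b_1=(-x_0,\dots,-x_{N-1},\ast)^\top$, $A_2$ collects the coefficients $s_1$ and $s_{j+1}-s_j$, and $b_2$ absorbs the constant term. Since the claim only concerns $t\in I$, the behaviour outside $I$ plays no role.

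Next we bound the weights. Inner biases satisfy $|x_j|\le 1$ since $I=(-1,1)$; the auxiliary neuron's bias is $1+h_1$-type, which gives the first entry of the maximum. The outer constant is at most $\|\phi_N\|_{L^\infty(I)}$. For the outer coefficients, $|s_j|\le 2\|\phi_N\|_{L^\infty}/h_j$, and the differences $s_{j+1}-s_j$ involve two such terms. The main obstacle is matching exactly the stated bound $\max\{1+h_1,\|\phi_N\|_\infty,2\max_j h_j^{-1}\}$, which as written is not homogeneous in $\phi_N$. To get it we would first rescale: use the nodal hat-function representation $\phi_N=\sum_j\phi_N(x_j)\,\eta_j$ and write each hat as $\eta_j=h_j^{-1}\varrho(t-x_{j-1})-(h_j^{-1}+h_{j+1}^{-1})\varrho(t-x_j)+h_{j+1}^{-1}\varrho(t-x_{j+1})$. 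Hidden-to-output weights can then be taken as the nodal values (bounded by $\|\phi_N\|_\infty$), with inner weights of size at most $2\max_j h_j^{-1}$. This arrangement yields precisely the stated bound, possibly at the cost of one extra layer, or by merging these factors into $A_1$ following \cite[Prop.~3.4]{AHS23_2956}, which we would follow for this bookkeeping.
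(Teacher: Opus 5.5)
\textbf{What your argument proves.} Your ramp expansion $\phi_N=\phi_N(x_0)+s_1\varrho(t-x_0)+\sum_{j=1}^{N-1}(s_{j+1}-s_j)\varrho(t-x_j)$ correctly proves that $\phi_N$ is realized exactly on $I$ by a network with one hidden layer of at most $N+1$ neurons. The paper gives no argument of its own; it only cites \cite[Prop.~3.4]{AHS23_2956}. So the realization part of your proposal is complete. The auxiliary neuron is not needed, since the constant can go directly into $b_2$. As written it would also be harmful: $\varrho(t-x_0+1)$ has bias $2>1+h_1$ whenever $h_1<1$.

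\textbf{The weight bound is the gap.} Neither mechanism you offer gives it for $\Phi_N\in\mathcal{NN}_{2,N+1,1,1}$ in the sense of Definition~\ref{def:NeuralNetworks}, where depth $2$ means a single hidden layer.

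\emph{Why ``merging factors into $A_1$'' fails.} With one hidden layer, a neuron $a_k\varrho(w_kt+b_k)$ creates a kink of size $a_k|w_k|$ at $-b_k/w_k$. The kink $s_{j+1}-s_j$ of $\phi_N$ at $x_j$ must therefore be the sum of the products $a_k|w_k|$ over the neurons located there. Three hats feed into each ramp, so this sum cannot be factored as (nodal value)$\times$(inner weight $\le 2\max_j h_j^{-1}$).

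\emph{No depth-$2$, width-$(N+1)$ network meets the stated bound in general.} Take the uniform mesh $h_j=2/N$ with $N\ge4$ and $\phi_N(x_j)=(-1)^jN$. The stated bound is then $M=N$, while every interior kink has modulus $4N/h=2N^2=2M^2$. Each neuron contributes at most $M^2$, so each of the $N-1$ interior nodes needs at least two neurons. That requires $2(N-1)>N+1$ neurons.

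\emph{The extra layer does give the stated bound.}
\begin{enumerate}
\item First hidden layer: $\varrho(t-x_j)$ for $j=-1,\dots,N-1$, with $x_{-1}:=x_0-h_1$. The biases are bounded by $1+h_1$.
\item Second hidden layer: the hats $\eta_0,\dots,\eta_N$, formed with weights $h_j^{-1}$, $-(h_j^{-1}+h_{j+1}^{-1})$, $h_{j+1}^{-1}$, all bounded by $2\max_j h_j^{-1}$. Since the hats are nonnegative, $\varrho$ acts on them as the identity.
\item Output weights: the nodal values $\phi_N(x_j)$.
\end{enumerate}
This is presumably where the terms $1+h_1$ and $2\max_{j=0,\dots,N}h_j^{-1}$ (with $h_0:=h_1$) come from. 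However, this network has depth $3$ in the paper's convention.

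\textbf{What you should claim instead.} Do not claim ``precisely the stated bound'' in depth $2$. Your argument establishes one of two things:
\begin{itemize}
\item depth $2$, width $N+1$, with weights bounded by $\max\{1,4\|\phi_N\|_{L^\infty(I)}\max_jh_j^{-1}\}$ (unit inner weights, $|x_j|\le1$, $|s_{j+1}-s_j|\le 2\|\phi_N\|_{L^\infty(I)}(h_j^{-1}+h_{j+1}^{-1})$); or
\item depth $3$, width $N+1$, with the stated bound.
\end{itemize}
Theorem~\ref{thm:relubalanced} uses only the depth-$2$ exact realization and size $O(N)$, and that part of your proposal is fully justified.
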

%%%%%%%%%%%%%%%%%%%%%%%%%%%%%%%%%%%%%%%%%%%%%%%%%%%%%%%%%%%%%%%%%%%%%%%%%
\subsection{Robust Shallow $\ReLU$-NN Solution Approximation}
\label{sec:RobNNSolAppr}
%%%%%%%%%%%%%%%%%%%%%%%%%%%%%%%%%%%%%%%%%%%%%%%%%%%%%%%%%%%%%%%%%%%%%%%%%
With Prop.~\ref{prop:approx_pl_func_ReLUNN}, 
Propositions \ref{prop:Reg1}, \ref{prop:shish}, \ref{prop:eXp} 
immediately imply robust $\ReLU$-NN emulation results
for shallow, fixed-depth $\ReLU$ NNs.
\begin{theorem}
\label{thm:relubalanced}
Suppose the assumptions of Prop.~\ref{prop:Reg1} hold. 
For $\e\in(0,1]$, $I=(-1,1)$
let $u_{\e}\in H^1_0(I)$ be the solution of (\ref{eq:BVPweak}).

Then, 
for every $0<\e\leq 1$ and for $1\leq N \leq 1/\e$ 
exist shallow ReLU NNs 
$\{\Phi^{S,N}_{\e}\}_{0<\e\leq 1},
 \{\Phi^{\text{eXp},N}_{\e}\}_{0<\e\leq 1} \in \mathcal{NN}_{2,{N+1},1,1}$
and a constant $C>0$ independent of $\e \in (0,1]$
such that
\begin{align}
\label{eq:reluShish}
\e \|u_{\e}' - \realiz(\Phi^{S,N}_{\e})'\|_{L^2(I)} 
	+ \|u_{\e} - \realiz(\Phi^{S,N}_{\e}) \|_{L^2(I)} 
\leq &\, C N^{-1}(1+\log(N)),
\\
\label{eq:relueXp}
\e \|u_{\e}' - \realiz(\Phi^{\text{eXp},N}_{\e})'\|_{L^2(I)} 
        + \|u_{\e} - \realiz(\Phi^{\text{eXp},N}_{\e}) \|_{L^2(I)} 
\leq &\, C N^{-1},
\end{align}
and 
\[
\realiz(\Phi^{S,N}_{\e})(\pm1) = 0, \quad \realiz(\Phi^{\text{eXp},N}_{\e})(\pm1) = 0.
\]

Furthermore, 
$\depth(\Phi^{S,N}_{\e}) = \depth(\Phi^{\text{eXp},N}_{\e}) = 2$ 
and 
there is a constant $\tilde{C}>0$ independent of $\e$ and of $N$ 
such that
\begin{equation}
\label{eq:relublsize}
\size\left(\Phi^{S,N}_{\e}\right) + \size\left(\Phi^{\text{eXp},N}_{\e}\right) 
\leq \tilde{C} N
\;.
\end{equation}
The weights and biases in the hidden layer of $\Phi^{S,N}_{\e}$ 
are independent of $u_{\e}$ and depend only on $\e$ and $N$.
\end{theorem}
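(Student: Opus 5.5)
The proof combines the FE results with the exact ReLU representation of CpwL functions. Fix $\e\in(0,1]$ and $1\leq N\leq 1/\e$. Let $\Delta_S$ and $\Delta_{eXp}$ be the Shishkin and eXp meshes with $N$ intervals and $\theta=2\beta_*$. Take the Galerkin approximations $u_\e^{\Delta_S}\in\mathcal{S}^1_0(\Delta_S)$ and $u_\e^{\Delta_{eXp}}\in\mathcal{S}^1_0(\Delta_{eXp})$ from \eqref{eq:discrete}. The nodal interpolants built in the proofs of Propositions~\ref{prop:shish} and \ref{prop:eXp} would serve equally well.

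By Propositions~\ref{prop:shish} and \ref{prop:eXp},
\[
\Vert u_\e-u_\e^{\Delta_S}\Vert_{1,\e,I}\leq CN^{-1}\ln N,
\qquad
\Vert u_\e-u_\e^{\Delta_{eXp}}\Vert_{1,\e,I}\leq CN^{-1}.
\]
By \eqref{eq:energy} and \eqref{eq:bbounds}, the quantity $\e\Vert v'\Vert_{L^2(I)}+\Vert v\Vert_{L^2(I)}$ is bounded by a constant times $\Vert v\Vert_{1,\e,I}$, with the constant depending only on $\underline{b}$. This gives \eqref{eq:reluShish} and \eqref{eq:relueXp}; the factor $1+\log N$ simply absorbs the case $N=1$, where $\ln N=0$.

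Next, apply Proposition~\ref{prop:approx_pl_func_ReLUNN} to $\phi_N=u_\e^{\Delta_S}$ and to $\phi_N=u_\e^{\Delta_{eXp}}$. This yields depth-2 ReLU networks $\Phi^{S,N}_\e,\Phi^{eXp,N}_\e\in\mathcal{NN}_{2,N+1,1,1}$ whose realizations coincide with these CpwL functions on $I$, and by continuity on $\overline I$. Since both FE functions lie in $\mathcal{S}^1_0$, the realizations vanish at $\pm1$. The errors are unchanged, because the realizations equal the FE functions and hence have the same weak derivatives.

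For the size bound \eqref{eq:relublsize}, I would use the explicit architecture behind Proposition~\ref{prop:approx_pl_func_ReLUNN}:
\[
\realiz(\Phi)(t)=c_0+\sum_{j=0}^{N-1}c_j\,\ReLU(t-x_j).
\]
Here the hidden layer has $N+1$ neurons, at most two nonzero parameters each (weight $1$ and bias $-x_j$), and the output layer has $N+2$ parameters. Hence $\size\leq 3N+4\leq 7N$, so \tilde C=14 works for the sum of both networks.

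The hidden-layer parameters are exactly the mesh nodes $x_j$ of $\Delta_S$, which depend only on $\e$, $N$ and $\theta$, and not on $u_\e$. All dependence on $u_\e$ sits in the output coefficients $c_j$, which are the slope jumps of $u_\e^{\Delta_S}$.

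The only point needing care is this architectural claim, because Proposition~\ref{prop:approx_pl_func_ReLUNN} as quoted does not itself state it. The FE error bounds themselves come directly from the earlier propositions. If the reference construction instead uses hat-function combinations, I would rewrite every hat function as a linear combination of the shifted ReLUs $\ReLU(\cdot-x_j)$ on $[-1,1]$. That representation makes both the $O(N)$ size bound and the $u_\e$-independence of the hidden layer evident.
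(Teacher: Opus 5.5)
Your proposal is correct and follows the paper's route. You take the $\mathbb{P}_1$-FE energy-norm bounds of Propositions~\ref{prop:shish} and \ref{prop:eXp} on $\Delta_S$ and $\Delta_{eXp}$ and transfer them unchanged to depth-2 ReLU networks via the exact representation of $\mathcal{S}^1(\Delta)$ in Proposition~\ref{prop:approx_pl_func_ReLUNN}. Your explicit architecture $c_0+\sum_j c_j\ReLU(t-x_j)$ usefully makes the $O(N)$ size bound and the $u_\e$-independence of the hidden layer (which depends on $\e$, $N$ and $\theta=2\beta_*$) explicit, where the paper leaves them implicit; the off-by-one in your neuron count and the double use of $c_0$ are harmless.
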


\begin{proof}
We invoke the Shishkin approximation rate bound \eqref{eq:ShiskBd},
and Prop.~\ref{prop:approx_pl_func_ReLUNN} 
stating that shallow ReLU NNs represent 
any element $v\in \mathcal{S}^1(\Delta)$ \emph{exactly},
on \emph{any partition} $\Delta$ of $I$.
We choose $N$ as in the bound \eqref{eq:ShiskBd}, 
for $N$ and $0 <\e \leq 1$, $\Delta$ as the Shishkin mesh $\Delta_S$, 
and leverage Prop.~\ref{prop:approx_pl_func_ReLUNN}.
Any element $v\in \mathcal{S}^1(\Delta_S)$ can be exactly represented
by (realization of) a NN $\realiz(\Phi^{S,N}_{\e})$ of depth $2$,
and width $O(N)$.

For the exponential mesh $\Delta_{eXp}$, we argue in the same way,
using Prop.~\ref{prop:eXp}.
\end{proof}

Similar to the proof of Theorem~\ref{thm:relubalanced}, other known robust 
approximation and FE convergence rate bounds have ReLU NN analogs. We
briefly comment on these, based on \cite[Thms. 5.1-5.3]{SunStynes95}.
Problem \eqref{eq:de}, \eqref{eq:bc} is a special case of \cite[Eqn.~(1.1)]{SunStynes95},
with (notation as in \cite{SunStynes95}) 
$m=1$, $L_1 \equiv 0$, and $a_0(x) = b(x)$.
Based on the results in \cite[Section~5]{SunStynes95} and using 
Prop.~\ref{prop:approx_pl_func_ReLUNN} we obtain 
\begin{proposition}\label{prop:SSCor5.1}
Assume that $b,f \in C^2(\overline{I})$. 

Then for every $2 \leq N\in \N$ and for every $\e \in (0,1]$, 
there is a ReLU NN of width $O(N)$ such that 
for $\{ u_\e \}_{0 < \e \leq 1}$ solution of \eqref{eq:de}, \eqref{eq:bc}  
holds
$$
\inf_{\Phi \in \mathcal{NN}^{\ReLU}_{2,N,1,1}} \| u_{\e} -  \realiz(\Phi) \|_{1,\e,I}
\leq 
C N^{-1}\log(N) 
\;,
$$
and
$$
\inf_{\Phi \in \mathcal{NN}^{\ReLU}_{2,N,1,1}} \| u_{\e} -  \realiz(\Phi) \|_{L^2(I)} 
\leq 
C(N^{-1}\log(N))^{2}
\;.
$$
Here, the constant $C>0$ is independent of $\e$ and $N$.
\end{proposition}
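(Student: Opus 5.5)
The plan is to follow the pattern of Theorem~\ref{thm:relubalanced}: obtain a robust CpwL approximant on a Shishkin mesh, and then transfer it exactly into a shallow ReLU network using Prop.~\ref{prop:approx_pl_func_ReLUNN}. The network architecture itself needs no further estimate. For any partition $\Delta$ with $N$ intervals, every $v\in\mathcal{S}^1(\Delta)$ is realized by a depth-2 ReLU NN of width $N+1$. Hence both infima are bounded by the corresponding errors of \emph{some} element of $\mathcal{S}^1_0(\Delta_S)$. Relabeling $N\mapsto N-1$ (or taking a mesh with $N-1$ intervals) matches the width $N$ in $\mathcal{NN}^{\ReLU}_{2,N,1,1}$ and changes only constants. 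That is why the statement requires $N\ge 2$: it keeps $\log N>0$ and leaves at least one interval.

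For the energy-norm bound, we choose $v$ to be the Galerkin approximation $u^{\Delta_S}_\e$, or the nodal interpolant. We cite \cite[Thm.~5.1]{SunStynes95}, specialized to $m=1$, $L_1\equiv 0$, $a_0=b$. Under $b,f\in C^2(\overline I)$, it gives $\|u_\e-u^{\Delta_S}_\e\|_{1,\e,I}\le CN^{-1}\log N$ uniformly in $\e$. Alternatively, this follows directly from Prop.~\ref{prop:shish}, since $C^2\subset H^1$. The regime $N>1/\e$ is not singularly perturbed. There, $\lambda=1/4$, the mesh is quasi-uniform, and the pointwise derivative bounds \eqref{eq:SBLsmooth} (valid for $C^2$ data) give $|u_\e|_{H^2}\lesssim \e^{-3/2}$. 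The resulting interpolation error is $\lesssim N^{-1}\e^{-1/2}\cdot\e\le N^{-1}$ in the energy norm. For the $L^2$ part it is $\lesssim N^{-2}\e^{-3/2}\le N^{-1/2}\cdot N^{-1}$, which needs care and is again supplied by \cite{SunStynes95}.

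The main step is the $L^2$ bound, which is of order $(N^{-1}\log N)^2$. It cannot come from the energy-norm estimate, because that only controls $\|\cdot\|_{L^2}$ at first order. The plan is to use the nodal interpolant $\mathcal{I}u_\e$ on $\Delta_S$ and the pointwise bounds \eqref{eq:SBLsmooth} with $j=2$, which are available because $b,f\in C^2$. The estimate splits over the mesh regions as follows.
\begin{itemize}
\item On the layer regions, $h\simeq \e N^{-1}\log N$, and $|u_\e''|\lesssim 1+\e^{-2}$. This gives $\|u_\e-\mathcal Iu_\e\|_{L^\infty}\lesssim h^2\e^{-2}\lesssim (N^{-1}\log N)^2$.
\item On the coarse region, the smooth part gives $N^{-2}$. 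The layer part is bounded pointwise by $e^{-\lambda/(\beta_*\e)}\le N^{-2}$ by the choice of $\theta$.
\end{itemize}
This yields $\|u_\e-\mathcal Iu_\e\|_{L^\infty(I)}\lesssim (N^{-1}\log N)^2$, and hence the same bound in $L^2(I)$. This is exactly the content of \cite[Thm.~5.2/5.3]{SunStynes95} in our notation.

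The expected obstacle is that the two bounds should hold for a \emph{single} network, or at least each infimum separately. Since the statement takes the two infima separately, the interpolant can serve for both if we check that it also meets the energy-norm bound. It does: on the layer regions, $\e\,h\,\|u_\e''\|\lesssim N^{-1}\log N$. On the coarse region, the layer part contributes $\e\cdot\e^{-1}N^{-2}\cdot$ and the smooth part contributes $\e N^{-1}$. Finally, we confirm that the interpolant vanishes at $\pm1$, and we apply Prop.~\ref{prop:approx_pl_func_ReLUNN} to conclude.
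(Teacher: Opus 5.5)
Your proposal is correct and follows the paper's route. The paper simply invokes the Shishkin-mesh CpwL error bounds of \cite[Cor.~5.1, Eqn.~(5.20)]{SunStynes95} and transfers them exactly to depth-2 ReLU NNs via Prop.~\ref{prop:approx_pl_func_ReLUNN}, and your nodal-interpolant argument with the pointwise bounds \eqref{eq:SBLsmooth} spells this out.

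One aside in your energy-norm paragraph is wrong, though harmless. First, $N>1/\e$ does not force $\lambda=1/4$. Second, on a quasi-uniform mesh $\e h|u_\e|_{H^2}\sim N^{-1}\e^{-1/2}$, not $N^{-1}\e^{1/2}$, so that estimate is not uniform in $\e$. The fix is that $\lambda=1/4$ forces $\e\gtrsim 1/\log N$. Then $h^2\|u_\e''\|_{L^\infty}\lesssim (N^{-1}\log N)^2$ and $\e h\|u_\e''\|_{L^\infty}\lesssim N^{-1}\log N$, so your pointwise interpolation argument covers every regime.
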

\begin{proof} 
The proof is based on \cite[Cor.~5.1]{SunStynes95}, Eqn.~(5.20).
With our choices $m=1$, we find that with Prop.~\ref{prop:approx_pl_func_ReLUNN} 
the error bounds in \cite[Cor.~5.1]{SunStynes95}
for continuous, piecewise affine approximations of $u_\e$ translate into the 
above bounds for the best approximations with shallow ReLU NNs.
\end{proof}
We emphasize that Prop.~\ref{prop:SSCor5.1} is an \emph{approximation error bound} 
for $u_\e$, solution of \eqref{eq:de}, \eqref{eq:bc}, 
which is obtained by CpwL functions on a Shishkin mesh $\Delta_S$.

For the Galerkin FE approximation $u^{\Delta_S}_\e \in S^1_0(\Delta_S)$ 
the corresponding robust error bounds from \cite[Chap.~5.3]{SunStynes95}
likewise translate to shallow DNN approximations, provided that the 
\emph{DNN loss function is chosen as, e.g. in the deep Ritz method}  \cite{DeepRitz}
corresponding to the ``energy'' functional
$$
{\mathcal R}_\e (v) := \frac{1}{2} B_\e(v,v) - F(v) \;,\; v\in H^1_0(I) \;.
$$
Here, the definition of ${\mathcal R}_\e$ involves 
suitable numerical integration evaluations of $b(x)$ and $f(x)$.

In this case, 
shallow DNN approximations $u^{NN}_\e$ are obtained as
minimizers of ${\mathcal R}_\e$, i.e.
$$
u^{NN}_\e := {\rm arg}\min 
\left\{ {\mathcal R}_\e (v^{NN}) : v^{NN} \in \realiz(\mathcal{NN}^{\ReLU}_{2,N,1,1}) \right\}
\;.
$$
Based on \cite[Cor.~5.2]{SunStynes95}, one has 
$$
\inf_{v^{NN} \in \realiz(\mathcal{NN}^{\ReLU}_{2,N,1,1})}
\| u_\e - v^{NN} \|_{1,\e,I} \leq C N^{-1}\log(N) \;.
$$
%
%%%%%%%%%%%%%%%%%%%%%%%%%%%%%%%%%%%%%%%%%%%%%%%%%%%%%%%%%%%%%%%%%%%%%%%%%
\subsection{Robust Deep $\ReLU$ Super-Approximation}
\label{sec:SupReLU}
%%%%%%%%%%%%%%%%%%%%%%%%%%%%%%%%%%%%%%%%%%%%%%%%%%%%%%%%%%%%%%%%%%%%%%%%%
The preceding result, Thm.~\ref{thm:relubalanced},
addressed the robust approximation of the solution $u_\e\in H^1_0(I)$ 
of the model problem in Sec.~\ref{sec:model},
by \emph{shallow} $\ReLU$-NNs, 
under the low regularity assumptions on the data $b(x)$ and $f(x)$ 
in Prop.~\ref{prop:Reg1}.
We now consider robust approximation rate bounds for deep NNs, 
still under the low Sobolev regularity on $f$, $b$ in \eqref{eq:de} 
which we assumed in Prop.~\ref{prop:Reg1}.

The principle of proof of Theorem~\ref{thm:relubalanced} 
was to leverage (known) $\e$-robust convergence
rate bounds of CpwL FE/FD approximations and to interpret the CpwL 
approximations on $\e$-dependent, Shishkin meshes 
in these results as \emph{exact realizations} of 
$\ReLU$-NNs of a suitable (fixed depth) architecture, 
using Prop.~\ref{prop:approx_pl_func_ReLUNN}.

In the present section, we go beyond this type of result. 
While still assuming finite, low Sobolev regularity of the data $b(x)$ and $f(x)$ 
in \eqref{eq:de}, we admit
heterogeneous activations for more efficient emulation of boundary layers, 
and, we leverage depth to obtain higher order convergence rate bounds. 
Specifically,
\emph{for the data-regularity given in Prop.~\ref{prop:Reg1},
certain deep feedforward NNs achieve robust rates of convergence 
that are higher, in terms of the network size,  
than those afforded by the CpwL FE approximations on eXp and Shishkin meshes.}
Our arguments are again based on the solution decomposition 
\eqref{eq:decomp} in Prop.~\ref{prop:Reg1}.
On grounds of the (assumed) constant reaction term,
the boundary layers are exponential (cf. Cor.~\ref{cor:bconst}).
We observe in Section~\ref{sec:NNexpBL}
that the use of $\tanh$ and sigmoid activations 
creates an ``exponential boundary layer feature space''.
Furthermore, 
in Sections~\ref{sec:sReLUSmFct}, \ref{sec:sReLUue} 
we leverage depth in the form of
so-called ``superconvergent'' expression rate bounds on Sobolev classes
for $\ReLU$-NNs shown e.g. in \cite{li2025superapproximationratesreluneural,yang2025deepneuralnetworksgeneral}. 
Unlike the fixed-depth NNs in Sec~\ref{sec:RobNNSolAppr},
super-expressivity results for these $\ReLU$ architectures entail 
networks whose depth $L$ is proportional to width, as explained in
\cite{yang2025deepneuralnetworksgeneral}.

We develop expression rate results in two scenarios: 
(i) the general case of variable reaction coefficient $b(x)$ in Prop.~\ref{prop:Reg1}, 
and 
(ii) the particular case of constant reaction coefficient in Cor.~\ref{cor:bconst},
where the boundary layer functions $u^{BL}_{\e,\pm}(x)$ 
are exponential functions \eqref{eq:uBLexp}.
We start by reviewing expression rate bounds for these, from \cite{OSX24_1085}.
%%%%%%%%%%%%%%%%%%%%%%%%%%%%%%%%%%%%%%%%%%%%%%%%%%%%%%%%%%%%%%%%%%%%%%%%%
\subsubsection{$\tanh$ NN emulation of exponential boundary layers}
\label{sec:NNexpBL}
%%%%%%%%%%%%%%%%%%%%%%%%%%%%%%%%%%%%%%%%%%%%%%%%%%%%%%%%%%%%%%%%%%%%%%%%%
In the model problem \eqref{eq:de}, \eqref{eq:bc} 
\emph{with constant reaction coefficient} $b(x)$, 
the boundary layers $u^{BL}_{\e,\pm}(x)$ 
are exponentials of scaled and shifted exponentials, cf. Cor.~\ref{cor:bconst}.
These allow \emph{exact, parsimonious NN emulation} with suitable activations.
The following result, proved in \cite[Sec.~7]{OSX24_1085},
shows that using $\tanh(\cdot)$- %and sigmoidal 
activations includes exponential boundary layer functions 
$u^{BL}_{\e,\pm}$ as in \eqref{eq:uBLexp}
in the NN feature space.
\begin{lemma}
\label{lem:tanhnnexp} 
[$\tanh$-emulation of exponential boundary layers \cite[Lem.~7.2]{OSX24_1085}]
For all $\tau\in(0,1]$ there exists a $\tanh$ NN $\Phi^{\exp}_{\tau}$
such that for all $x\geq 0$ there holds
\begin{subequations}
\label{eq:tanhnnexperr} 
\begin{align}
\label{eq:tanhnnexperri}
\snorm{ \exp(-x) - \realiz( \Phi^{\exp}_{\tau} )(x) }
        \leq &\, \exp(-\tau),
        \\
        \label{eq:tanhnnexperrii} 
\snorm{ -\exp(-x) - \realiz( \Phi^{\exp}_{\tau} )'(x) }
        \leq &\, \exp(-\tau),
\end{align}
\end{subequations} 
and such that 
$\depth( \Phi^{\exp}_{\tau} ) = 2$
and
$\size( \Phi^{\exp}_{\tau} ) = 4$.
\end{lemma}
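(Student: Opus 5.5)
The plan is to write down an explicit one-neuron $\tanh$ network and check both error bounds by direct computation. The starting point is the identity
\[
1-\tanh(z) = \frac{2e^{-2z}}{1+e^{-2z}} ,
\]
which shows that $1-\tanh$ decays like $2e^{-2z}$. Hence a rescaled, shifted copy of $1-\tanh$ reproduces $e^{-x}$ up to a relative perturbation that we control through the shift.

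\textbf{Construction.} For a shift $s>0$, to be fixed below, set $\delta:=e^{-2s}$. Define the depth-$2$ network $\Phi^{\exp}_\tau=((A_1,b_1),(A_2,b_2))$ with a single hidden neuron:
\[
A_1=\tfrac12,\quad b_1=s,\quad A_2=-\tfrac{e^{2s}}{2},\quad b_2=\tfrac{e^{2s}}{2}.
\]
This gives $\depth=2$, and the size is exactly $4$, since all four entries are nonzero. Its realization is
\[
g(x):=\realiz(\Phi^{\exp}_\tau)(x)=\frac{e^{2s}}{2}\bigl(1-\tanh(x/2+s)\bigr)=\frac{e^{-x}}{1+\delta e^{-x}}=\frac{1}{e^{x}+\delta}.
\]

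\textbf{Value estimate.} For $x\ge0$ we have
\[
\snorm{e^{-x}-g(x)}=\frac{\delta e^{-2x}}{1+\delta e^{-x}}\le\delta .
\]

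\textbf{Derivative estimate.} Differentiating gives $g'(x)=-e^{x}/(e^x+\delta)^2$. Therefore
\[
\snorm{-e^{-x}-g'(x)}=\frac{(e^x+\delta)^2-e^{2x}}{e^{x}(e^x+\delta)^2}=\frac{2\delta e^x+\delta^2}{e^x(e^x+\delta)^2}\le 2\delta+\delta^2\le 3\delta ,
\]
where we used $e^x\ge1$ for $x\ge0$ and $\delta\le1$.

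\textbf{Choice of parameter.} It therefore suffices to take $3\delta\le e^{-\tau}$, for instance $s=(\tau+\ln 3)/2$. Then both \eqref{eq:tanhnnexperri} and \eqref{eq:tanhnnexperrii} hold. (For $\tau\in(0,1]$ this bound is mild, and any larger shift $s$ would also work.)

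There is no real obstacle in this argument. The only point requiring care is the derivative bound, where the denominator has to be bounded below using $x\ge0$.
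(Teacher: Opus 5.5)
Your proof is correct: the single-neuron network with realization $\tfrac{e^{2s}}{2}\bigl(1-\tanh(x/2+s)\bigr)=(e^{x}+\delta)^{-1}$ has depth $2$ and size $4$, and your error bounds $\delta$ and $2\delta+\delta^{2}$ on $x\geq 0$ are right. The paper gives no proof here and only cites \cite[Lem.~7.2]{OSX24_1085}; your shifted-tail construction is the natural argument for this statement, and with depth $2$ and size $4$ the realization is essentially forced to be of the form $c\tanh(ax+s)+d$ anyway. Choosing $s=\tfrac12\ln(3/\eta)$ gives any accuracy $\eta\in(0,3]$. So your argument also yields the bounds with $\tau$ in place of the rather weak $\exp(-\tau)\geq e^{-1}$, which is the form actually needed for robust boundary-layer emulation.
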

According to \cite[Remk.~7.3]{OSX24_1085}, 
such a result also holds for the sigmoidal activation 
\[
\varrho(x) = \tfrac{\exp(x)}{1 + \exp(x)} = \tfrac{1}{1 + \exp(-x)},
\qquad
x\in\R.
\]
%
%%%%%%%%%%%%%%%%%%%%%%%%%%%%%%%%%%%%%%%%%%%%%%%%%%%%%%%%%%%%%%%%%%%%%%%%%
\subsubsection{Robust Exponential Deep $\ReLU$ NN approximation of Layers}
\label{sec:ReLUexpBL}
%%%%%%%%%%%%%%%%%%%%%%%%%%%%%%%%%%%%%%%%%%%%%%%%%%%%%%%%%%%%%%%%%%%%%%%%%
If only $\ReLU$-activations are admitted, 
in \cite[Prop.~5.3]{OSX24_1085}
rates of expression for strict $\ReLU$ NNs of 
exponential boundary layers are shown.
\begin{lemma} \label{lem:reluexp} 
There exist positive constants $\beta, C> 0$ such that 
for every $\e \in (0,1]$, $p\in \N$ there are $\ReLU$ NNs 
$\Phi^{\exp,p}_\e \in \mathcal{NN}^{\ReLU}_{L,N,1,1}$ 
such that
it holds
\begin{equation}\label{eq:ReLUBL1}
\e | \exp(\cdot/\e) - \realiz(\Phi^{\exp,p}_\e) |_{H^1(I)} 
\leq C\exp(- \beta p) \;,
\end{equation}
and, for $q\in \{2,\infty\}$,
\begin{equation}\label{eq:ReLUBL0} 
\| \exp(\cdot/\e) - \realiz(\Phi^{\exp,p}_\e) \|_{L^q(I)} 
\leq C\exp(- \beta p) \;.
\end{equation}
Depth and size of the NNs $\Phi^{\exp,p}_\e$ 
are bounded as 
\begin{equation}\label{eq:LN}
L(\Phi^{\exp,p}_\e) \leq C p (1+\log_2(p)) \;,
\quad 
M(\Phi^{\exp,p}_\e) \leq C p^2 \;.
\end{equation}
\end{lemma}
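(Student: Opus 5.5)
We will prove Lemma~\ref{lem:reluexp} by rescaling, so that the problem becomes approximating a single fixed analytic function on an interval of unit length, and then using exponentially convergent deep $\ReLU$ emulation of polynomials. The statement as written uses $\exp(\cdot/\e)$ on $I=(-1,1)$; this function is unbounded as $\e\to0$. We interpret it, as in \cite{OSX24_1085}, as the boundary layer $\exp(-(1+x)/\e)$; the same argument handles $\exp(-(1-x)/\e)$. Set $y=(1+x)/\e\in(0,2/\e)$ and $g(y)=e^{-y}$.

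First, the target norms transform as
\[
\e|v(\cdot)|_{H^1(I)}=\e^{1/2}\,|g|_{H^1(0,2/\e)},\qquad \|v\|_{L^2(I)}=\e^{1/2}\,\|g\|_{L^2(0,2/\e)}.
\]
It therefore suffices to approximate $g$ on $(0,2/\e)$ with errors in $L^\infty$, $L^2$ and $H^1$ that do not grow with $\e$. The factor $\e^{1/2}\le 1$ absorbs the interval length: the $L^2$ norm over $(0,2/\e)$ of an error that is $O(e^{-\beta p})$ on a unit-length piece and $O(e^{-y})$ on the tail is bounded independently of $\e$.

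Second, split $(0,2/\e)$ at $y_p=\min\{p,2/\e\}$.
\begin{itemize}
\item On $[0,y_p]$, approximate $e^{-y}$ by its Chebyshev interpolant (or truncated Taylor series) of degree $\simeq p$. Since $e^{-y}$ is entire, the error in $W^{1,\infty}(0,y_p)$ decays like $e^{-\beta p}$ once the degree is a sufficiently large constant multiple of $p$.
\item Emulate this polynomial by a deep $\ReLU$ NN, using the known results (Opschoor--Petersen--Schwab, cf.\ \cite{OPS2020}). These give error $e^{-\beta p}$ in $W^{1,\infty}$ with depth $O(p\log p)$ and size $O(p^2)$; this is exactly the bound \eqref{eq:LN}.
\item On $[y_p,2/\e]$, the target already satisfies $e^{-y}\le e^{-p}$. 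There we want the NN to be (nearly) the constant $e^{-y_p}$, or better, identically $0$ after a short linear ramp.
\end{itemize}

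Third, glue the two regions. Compose with a clipping map $y\mapsto\min\{y,y_p\}=y-\ReLU(y-y_p)$, which is one extra $\ReLU$ layer. Then subtract a CpwL correction on $[y_p,y_p+1]$ that brings the output from its value near $e^{-y_p}$ to zero. This correction is represented exactly by Prop.~\ref{prop:approx_pl_func_ReLUNN} with $O(1)$ neurons, so it costs a constant amount of size and depth.

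The pointwise errors on the tail are then bounded by $e^{-y}+Ce^{-p}$, with derivative errors of the same order. The $L^2(y_p,2/\e)$ norm of $e^{-y}$ is $O(e^{-p})$, and the zero extension contributes nothing further. The $L^\infty$ bound, the case $q=\infty$ of \eqref{eq:ReLUBL0}, follows directly.

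The main obstacle is making every estimate uniform in $\e$ while the domain $(0,2/\e)$ is unbounded. This is why the polynomial approximation is done only on the fixed-length window $[0,p]$, and why the NN output must be cut off to exactly zero (not merely kept small) on the long tail. With that cutoff, the tail's contribution to the $L^2$ and $H^1$ errors is just that of $e^{-y}$ itself, with no $\e^{-1/2}$ length factor.

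Finally, rescale back via $x\mapsto (1+x)/\e$. This is an affine first layer, so it does not change the size count, and it yields \eqref{eq:ReLUBL1}--\eqref{eq:LN}.
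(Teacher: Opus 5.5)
Your argument is correct once the target is read as $\exp(-(1+x)/\e)$, as you do. The literal $\exp(x/\e)$ on $I$ cannot be approximated to $\e$-independent accuracy by CpwL functions with an $\e$-independent number of pieces. The paper gives no proof of its own and only cites \cite[Prop.~5.3]{OSX24_1085}; as far as can be judged (the stated depth $O(p\log p)$ and size $O(p^2)$ are precisely what \cite{OPS2020} gives for one degree-$O(p)$ polynomial at accuracy $e^{-\beta p}$), your route is essentially the argument behind it: approximate by a polynomial of degree $O(p)$ on a layer window of width $O(p\e)$, then emulate it with the deep $\ReLU$ networks of \cite{OPS2020} at accuracy $e^{-\beta p}$.

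One simplification: the ramp-to-zero step is unnecessary. Without skip connections, carrying it through the deep network also costs $O(p\log p)$ identity neurons rather than $O(1)$, though this still fits within $Cp^2$. The merely clipped network already has $W^{1,\infty}(0,2/\e)$ error $O(e^{-\beta p})$. Moreover, $\|\cdot\|_{L^\infty(I)}$ and $\e\|(\cdot)'\|_{L^\infty(I)}$ are invariant under $y=(1+x)/\e$, and $|I|=2$. Hence the Jacobian factor $\e^{1/2}$ exactly cancels the tail length $2/\e$ you were worried about, and the $L^2(I)$ and $\e|\cdot|_{H^1(I)}$ bounds follow with no tail analysis.
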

We remark that for a target approximation accuracy $0 < \tau \leq 1$
in the norm $\| \circ \|_{1,\e,I}$,
based on \eqref{eq:ReLUBL0}, \eqref{eq:ReLUBL1} one chooses $p= O(|\log(\tau)|)$. 
This results in \eqref{eq:LN} in the $\e$-uniform depth and size bounds
\begin{equation}\label{eq:LNtau}
L(\Phi^{\exp,p}_\e) \leq C |\log(\tau)| (1+\log(|\log(\tau)|)) \;,
\quad 
M(\Phi^{\exp,p}_\e) \leq C |\log(\tau)|^2 
\;.
\end{equation}
%
%%%%%%%%%%%%%%%%%%%%%%%%%%%%%%%%%%%%%%%%%%%%%%%%%%%%%%%%%
\subsubsection{$\ReLU$ NN Super-Approximation of smooth functions}
\label{sec:sReLUSmFct}
%%%%%%%%%%%%%%%%%%%%%%%%%%%%%%%%%%%%%%%%%%%%%%%%%%%%%%%%%
For neural approximation bounds for the 
smooth solution parts $u_0$ and $u^R_\e$ 
in the decomposition \eqref{eq:decomp},
we recap recent results from \cite[Thms.~1 \& 2]{li2025superapproximationratesreluneural} 
on deep ReLU NN expression rates for smooth Sobolev functions.
In these results, bitstring encodings afford higher convergence rates 
in Sobolev spaces 
in terms of the NN size (expressed in terms of the number of neurons)
than the best piecewise polynomial (spline) approximations.
We require only a 
particular case of \cite[Thms.~1 \& 2]{li2025superapproximationratesreluneural}
($d=1$, $p=2$, $m=0,1$), which reads as follows.
\begin{proposition}\label{prop:ReLUSupr} 
{[$\ReLU$ super approximation in $L^2(I)$ and $H^1(I)$]}

In $I = (0,1)$, for $q\in [1,\infty]$,
let $w\in (W^{2,q}\cap W^{1,q}_0)(I)$ be given.

Then there exists $C>0$ such that 
for any $L\in \N$ and $W\in \N$, 
exists a NN $\Phi^w_{L,W} \in \mathcal{NN}^{\ReLU}_{\bar{L},\bar{W},1,1}$ 
such that for $k=0,1$
\begin{equation}\label{eq:sReLUBd}
| w - \realiz(\Phi^w_{L,W} ) |_{W^{k,q}(I)} 
\leq 
C (W L)^{-2(2-k)} (\log(2WL))^3 | w |_{W^{2,q}(I)} \;.
\end{equation}
In \eqref{eq:sReLUBd}, 
Depth $\bar{L} = \bar{L}(\Phi^w_{L,W})$ 
and 
width $\bar{W} = \bar{W}(\Phi^w_{L,W})$ 
are bounded as
\begin{equation}\label{eq:sReLUdw}
\bar{L} \leq C L \log(8L)^2 \;, 
\quad 
\bar{W} \leq C W \log(8W)^2 \;.
\end{equation}
\end{proposition}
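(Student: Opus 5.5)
The statement is a special case of \cite[Thms.~1 \& 2]{li2025superapproximationratesreluneural}, so the plan is to specialize those results rather than re-derive them. I first fix the parameters there as dimension $d=1$, target smoothness order $2$ (so $w\in W^{2,q}$), and error-seminorm order $m=k\in\{0,1\}$. The rate in those theorems is of the form $(WL)^{-2(s-m)/d}$ up to polylogarithmic factors. With $s=2$ and $d=1$ this becomes $(WL)^{-2(2-k)}$, which is the exponent in \eqref{eq:sReLUBd}. The logarithmic factor $(\log(2WL))^3$ and the architecture bounds \eqref{eq:sReLUdw} are read off from the stated width/depth bounds of those theorems, which take the form $C W\log(\cdot)^2$ and $C L\log(\cdot)^2$.

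Second, I must check that the right-hand side contains only the seminorm $|w|_{W^{2,q}(I)}$ and not the full norm. Here the assumption $w\in W^{1,q}_0(I)$ is used. Since $w(0)=w(1)=0$, Rolle's theorem gives a point where $w'$ vanishes. Hence $\|w'\|_{L^q}\le |w|_{W^{2,q}}$ and, by Poincaré, $\|w\|_{L^q}\le C|w|_{W^{2,q}}$. Therefore $\|w\|_{W^{2,q}(I)}\le C|w|_{W^{2,q}(I)}$ on this subspace, and the norm in the cited theorems can be replaced by the seminorm.

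Third, if the cited results are stated on $[0,1]^d$ and require the approximant to be valid on the closed cube, I apply them directly on $I=(0,1)$. The cited papers handle the trifling region near the boundary using a ``horizontal shift'' or ``middle value'' construction. I take that construction as given, noting that it costs only constant factors in width and depth in $d=1$.

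The main obstacle is the $k=1$ case with $q=\infty$ versus finite $q$. I must verify that the cited super-approximation in the $W^{1,q}$-seminorm holds uniformly for all $q\in[1,\infty]$ with the stated logarithmic power. I will check this first against the theorem's hypotheses. If the cited result covers only $q<\infty$, or $q=\infty$ with a different logarithmic power, the statement should be read with the constant $C$ allowed to depend on $q$ and with the exponent of the logarithm taken from the source.
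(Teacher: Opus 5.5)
Your proposal is correct and matches the paper, which gives no independent proof but presents the statement as the special case $d=1$, smoothness order $2$, $m=k\in\{0,1\}$ of \cite[Thms.~1 \& 2]{li2025superapproximationratesreluneural}. Your Rolle/Poincar\'e argument giving $\|w\|_{W^{2,q}(I)}\le C|w|_{W^{2,q}(I)}$ on $(W^{2,q}\cap W^{1,q}_0)(I)$ is valid and justifies the seminorm on the right-hand side, and your caution about the range of $q$ is reasonable, though the paper only applies the result with $q=2$.
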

We remark that in terms of the NN size of $\Phi^w_{L,W}$, 
see Def.~\ref{def:NeuralNetworks}, Item. 6,
$$
\#(\Phi^w_{L,W}) 
\leq C \bar{L}\bar{W} 
\leq C LW \log(8L)^2 \log(8W)^2 \;.
$$
and
$$
\size(\Phi^w_{L,W}) 
\leq C \bar{L}\bar{W}^2 
\leq C LW^2 \log(8L)^2 \log(8W)^4 \;.
$$ 
We shall use these rates for the $\e$-uniform approximation of the ``smooth'' parts 
$u_0$ and $u_\e^R$ in the decomposition \eqref{eq:decomp} while 
leveraging Lemma~\ref{lem:tanhnnexp} resp. 
the exponential robust boundary layer approximation rate bound 
\cite[Prop.~5.3]{OSX24_1085} by strict $\ReLU$ NNs.
%%%%%%%%%%%%%%%%%%%%%%%%%%%%%%%%%%%%%%%%%%%%%%%%%%%%%%%%%55
\subsubsection{Robust Deep NN Expression Rates for $u_\e$}
\label{sec:sReLUue}
%%%%%%%%%%%%%%%%%%%%%%%%%%%%%%%%%%%%%%%%%%%%%%%%%%%%%%%%%55
With the preceding results, we are in position to 
establish existence of $\e$-robust, super-convergent 
NN approximation rate bounds of solutions 
$u_\e \in H^1_0(I)$ of \eqref{eq:de}, \eqref{eq:bc}.
Then Prop.~\ref{prop:Reg1} furnishes the decomposition \eqref{eq:decomp}
\begin{equation}\label{eq:uedec2}
u_\e = (u_0 + u_\e^R) + u_{\e,-}^{BL} + u_{\e,+}^{BL} \;,
\end{equation}
where the smooth part $u_0 + u_\e^R$  
is bounded in $H^k(I)$ independently of $\e\in (0,1]$.

We consider now the case that $0 < b = const.$, 
i.e. the setting of Cor.~\ref{cor:bconst}. 
There, $u_{\e,\pm}^{BL}$ in \eqref{eq:uedec2}
are (translated and dilated) exponentials \eqref{eq:uBLexp}, 
and we may use Lemmas~\ref{lem:tanhnnexp}, \ref{lem:reluexp} 
to robustly express the exponential part of
$u_{\e,\pm}^{BL}$ in \eqref{eq:uedec2}.
%%%%%%%%%%%%%%%%%%%%%%%%% constant b %%%%%%%%%%%%%%%%%%%%%%%%%%%
\begin{theorem}\label{thm:sReLUbcst}
{[Robust deep $\ReLU$ NN expression]} 
\newline
Consider the BVP \eqref{eq:de}, \eqref{eq:bc}, 
with constant reaction coefficient $b(x) = b > 0 $,
and $f\in H^2(I)$, for $0<\e\le 1$.

Then for every emulation accuracy $0<\tau\leq 1$, 
and for every $0 < \e\leq 1$,
there exist deep feedforward $\ReLU$ NNs 
$\{\Phi^\tau_\e \}_{0<\e\leq 1}\subset \mathcal{NN}^{\ReLU}_{\bar{L},\bar{W},1,1}$
of
width and depth bounded for $\tau\downarrow 0$ 
(constants hidden in $\lesssim$ independent of $\e$)
as
\begin{equation}\label{eq:NNwddp}
\bar{W}(\tau) \lesssim |\log_2(\tau)|^2, 
\quad 
\bar{L}(\tau) \lesssim \tau^{-1/[2(1-\delta)]}
\;,
\end{equation}
where $0<\delta<1$ is arbitrary, fixed.
For such $\delta$
exist constants $C_\delta, \tilde{C}_\delta > 0$ 
such that 
for every $0 < \e, \tau \leq 1$
$$
M(\Phi^\tau_\e)  \leq C_\delta \tau^{-1/[2(1-\delta)]} 
\;,
$$
and
$$
\| u_\e - \realiz{(\Phi^\tau_\e}) \|_{1,\e,I} 
\leq 
\tau 
\leq 
\tilde{C}_\delta M^{-2(1-\delta)} 
\;,
\quad 
\| u_\e - \realiz{(\Phi^\tau_\e}) \|_{L^2(I)} 
\leq 
\tau^2
\leq 
\tilde{C}^2_\delta M^{-4(1-\delta)} 
\;.
$$
\end{theorem}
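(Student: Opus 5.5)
The plan is to use the decomposition \eqref{eq:uedec2} from Prop.~\ref{prop:Reg1} with $k=2$, in the constant-coefficient setting of Cor.~\ref{cor:bconst}. Each summand is emulated by a separate strict $\ReLU$ sub-network. The sub-networks are then combined by parallelization and a final linear output layer. The smooth part is $v_\e := u_0 + u^R_\e$. Since $b$ is constant and $f\in H^2(I)$, $u_0=f/b\in H^2(I)$ with $\e$-independent bound. Remark~\ref{rmk:Shk=2} gives $\|(u^R_\e)''\|_{L^2(I)}\le C$ uniformly in $\e$. Hence $|v_\e|_{H^2(I)}\le C$ independently of $\e$.

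\textbf{Step 1 (boundary conditions).} Prop.~\ref{prop:ReLUSupr} requires $w\in W^{1,q}_0$, but $v_\e$ does not vanish at $\pm1$. I will subtract the affine interpolant $\ell_\e$ of $v_\e(\pm1)$. Since $u^R_\e(\pm1)=0$, $\ell_\e$ interpolates $u_0(\pm1)$, so $\ell_\e$ is $\e$-independent. The function $\ell_\e$ is realized exactly by a depth-$2$ $\ReLU$ network of size $O(1)$. Then $w_\e:=v_\e-\ell_\e\in H^2\cap H^1_0$ with $|w_\e|_{H^2}\le C$. After an affine map of $(-1,1)$ to $(0,1)$, Prop.~\ref{prop:ReLUSupr} with $q=2$ gives networks with
\[
\|w_\e-\realiz(\Phi)\|_{L^2}\lesssim (WL)^{-4}\log(2WL)^3,\quad |w_\e-\realiz(\Phi)|_{H^1}\lesssim (WL)^{-2}\log(2WL)^3.
\]
Because $\e\le1$, the energy norm error is $\lesssim (WL)^{-4}\log^3+\e (WL)^{-2}\log^3$. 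I will keep the $H^1$ rate, so the energy error is $\lesssim (WL)^{-2}\log(2WL)^3$ and the $L^2$ error is its square.

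\textbf{Step 2 (parameter choice).} I fix the width $W\simeq|\log_2\tau|^2$, which makes the layer networks below fit inside the same width. I choose $L$ so that $(WL)^{-2}\log(2WL)^3\le\tau/3$. This needs $WL\gtrsim\tau^{-1/2}|\log\tau|^{3/2}$. The logarithm is absorbed by the power $\delta$, giving $\bar L\lesssim L\log(8L)^2\lesssim\tau^{-1/[2(1-\delta)]}$.

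\textbf{Step 3 (boundary layers).} By Cor.~\ref{cor:bconst}, $u^{BL}_{\e,\pm}$ lie in the span of $\exp(-\sqrt b(1\pm x)/\e)$ with $\e$-bounded coefficients. After the affine rescaling $x\mapsto -\sqrt b(1\pm x)$ and the substitution $\e\to\e/\sqrt b$, Lemma~\ref{lem:reluexp} applies. It yields, with $p\simeq|\log\tau|$, errors below $\tau/3$ in both $\|\cdot\|_{L^2}$ and $\e|\cdot|_{H^1}$. The corresponding networks have depth $O(|\log\tau|\log|\log\tau|)$ and size $O(|\log\tau|^2)$.

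\textbf{Step 4 (assembly and the main obstacle).} The sub-networks have different depths. I will extend the shallower ones with identity $\ReLU$ layers $x=\varrho(x)-\varrho(-x)$ and put all of them in parallel, with summation in the output layer. The total size is dominated by the smooth-part network, $M\lesssim \bar L\bar W^2$. This is where care is needed: the $W^2$ factor and the logarithms must be absorbed into $\tau^{-\delta'}$ so that $M\le C_\delta\tau^{-1/[2(1-\delta)]}$ holds. Inverting this gives $\tau\le\tilde C_\delta M^{-2(1-\delta)}$, after re-choosing $\delta$. I also expect a subtlety with boundary values. Lemma~\ref{lem:reluexp} does not enforce exact zero traces, but the theorem only asserts norm bounds, so this is harmless. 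The $L^2$ claim $\tau^2$ is the other delicate point. For the smooth part it follows from the $(WL)^{-4}$ rate. For the layers it requires choosing $p$ about twice as large, so that their $L^2$ error is $\le\tau^2$, which costs only constants.
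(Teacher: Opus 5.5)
Your proposal is correct and follows the paper's own proof: the decomposition \eqref{eq:uedec2}, then Prop.~\ref{prop:ReLUSupr} for $u_0+u^R_\e$ with growing depth, then Lemma~\ref{lem:reluexp} for the exponential layers, then parallelization. It is more careful than the paper in three places: lifting the nonzero traces of $u_0+u^R_\e$ by an affine function, padding depths with identity layers, and enlarging $p$ to get the $\tau^2$ bound in $L^2(I)$. One bookkeeping point: keep the nominal width in Prop.~\ref{prop:ReLUSupr} bounded, as the paper does, because taking $W\simeq|\log_2\tau|^2$ there gives $\bar W\lesssim W\log(8W)^2$, which overshoots the stated $\bar W(\tau)\lesssim|\log_2\tau|^2$ by a factor $(\log|\log_2\tau|)^2$; the layer sub-networks alone already account for width $\lesssim|\log_2\tau|^2$.
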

\begin{proof}
We recall the decomposition \eqref{eq:uedec2}. 
In \eqref{eq:uedec2}, 
we approximate the regular part $u_0+u_\e^R$
with the ReLU super-approximation result Proposition~\ref{prop:ReLUSupr},
where we fix the width $W$, and let $L\to \infty$.
The logarithmic terms in the bounds \eqref{eq:sReLUBd}, \eqref{eq:sReLUdw}
are absorbed into polynomial bounds via the (arbitrary small) parameter $\delta>0$.
The boundary layers in \eqref{eq:uedec2} are 
approximated $\e$-uniformly by ReLU NNs with Lemma~\ref{lem:reluexp}.

\noindent
Step 1: 
Approximation of the regular part.
The regular part $u_0+u_\e^R$ in \eqref{eq:uedec2} satisfies 
$\| u_0+u_\e^R \|_{H^2(I)} \leq C(b,f) < \infty$ for $0<\e \leq 1$.
Robust NN emulation rate bounds are
obtained by expressing $u_0 + u_\e^R$
by deep $\ReLU$-NNs with Prop.~\ref{prop:ReLUSupr}, 
with $q=2$, as follows: 
fixing $\bar{W}$, and increasing $\bar{L}$, 
\eqref{eq:sReLUBd} yields the existence of 
$\ReLU$-NNs 
$\{ \Phi_{\e}^{reg,M}\}_{\stackrel{M\in \mathbb{N}}{0<\e\leq 1}}$
of fixed widths $W(\Phi_{\e}^{reg,M}) \leq C^{reg}$
such that for any $0<\delta\leq 1$ 
there is a constant $C_\delta>0$ 
such that for NN sizes $M\in \N$ and every $0<\e \leq 1$ 
holds
\[
\begin{array}{rcl}
\| u_0+u_\e^R - \realiz{(\Phi_{\e}^{reg,M})} \|_{1,\e,I}
&\simeq&
\e | u_0+u_\e^R - \ \realiz{(\Phi_{\e}^{reg,M})} |_{H^1(I)}
\\
&+& 
 \| u_0+u_\e^R - \realiz{(\Phi_{\e}^{reg,M})} \|_{L^2(I)}
\\
& \leq & 
C_\delta M^{-(2-\delta)}(\e+ M^{-(2-\delta)}) \| u_0 + u_\e^R \|_{H^2(I)} 
\\
&\leq & C_\delta C(b,f) M^{-(2-\delta)}(\e+ M^{-(2-\delta)}) 
\\
&\leq & 2 C_\delta C(b,f) M^{-(2-\delta)}
\;.
\end{array}
\]

\noindent
Step 2: Approximation of the exponential boundary layers \eqref{BLs}.
We use the assumption that $b$ is constant.
The explicit form \eqref{eq:uBLexp} of $u^{BL}_{\e,\pm}$
and 
Lemma~\ref{lem:reluexp}, with bounds
\eqref{eq:ReLUBL1}, \eqref{eq:ReLUBL0},
yields the existence of $\ReLU$-NNs 
%$\{ \Phi^{{\rm exp},\kappa,p}_\e \}_{0<\e\leq 1}$
$\{ \Phi^{BL,\tau}_{\e,\pm} \}_{0<\e\leq 1}$
such that
\[
\| u^{BL}_{\e,\pm} - \realiz{(\Phi^{BL,\tau}_{\e,\pm})} \|_{1,\e,I} 
\lesssim e^{-bp} \stackrel{!}{\leq} \tau,
\]
and whose depth and size are bounded by \eqref{eq:LN} as
\[
L(\Phi^{BL,\tau}_{\e,\pm}) \lesssim p(1+\log_2(p)) \simeq |\log_2(\tau)|( 1 + \log_2(|\log_2(\tau)|)),
\]
and
%\[
%W(\Phi^{BL,\tau}_{\e,\pm}) \lesssim p \simeq |\log_2(\tau)| 
%\;.
%\]
%Using these bounds we observe that
%\[
%\#(\Phi^{BL}_{\e,\pm})
%\lesssim L(\Phi^{BL}_{\e,\pm}) W(\Phi^{BL}_{\e,\pm}) 
%\lesssim |\log_2(\tau)|^2 (1+\log_2(|\log_2(\tau)|) 
%\]
%and
\[ 
M(\Phi^{BL,\tau}_{\e,\pm})
%\lesssim L(\Phi^{BL}_{\e,\pm}) W(\Phi^{BL}_{\e,\pm})^2 
\lesssim |\log_2(\tau)|^2 %(1+\log_2(|\log_2(\tau)|)
\]
with constants hidden in $\lesssim$ independent of $\e \in (0,1]$.

\noindent
Step 3: 
Construction of the $\ReLU$-NNs 
$\{\Phi^\tau_\e \}_{0<\e\leq 1}$.
Given emulation accuracy $\tau \in (0,1]$, 
we choose $M(\tau)\simeq \tau^{-1/(2-2\delta)}$
and set, 
with $\Parallel{\Phi_1,\Phi_2}$ denoting the parallelization of NNs $\Phi_1,\Phi_2$,
\[
\Phi^\tau_\e :=
% \Phi^{BL,\tau}_{\e,\pm} =
 \Parallel{\Phi^{BL,\tau}_{\e,\pm},\Phi_{\e}^{reg,M}}
\;.
\]
Then 
$M(\Phi^{\tau}_\e) \leq 2 M(\Phi^{BL,\tau}_{\e,\pm}) + M(\Phi_{\e}^{reg,M})$,
giving the claimed NN size bound.
\end{proof}
\begin{remark}\label{rmk:sReLU}
Observe that the size of the NN for the emulation of $u^{BL}_{\e,\pm}$ 
in Step 2 is negligible when compared to the size of the 
$\ReLU$-NN in Step 1, due to the (uniform w.r. to $\e$)
exponential expressivity of 
$u^{BL}_{\e,\pm}$ by $\ReLU$-NNs furnished by Proposition~\ref{prop:ReLUSupr}.
\end{remark}
\begin{remark}\label{rmk:tanhNN}
Arguing with Lemma~\ref{lem:tanhnnexp},
the same result is true if a fixed number of 
$\tanh( )$ activations are included to express $u^{BL}_{\e,\pm}$ 
\emph{exactly} in the step 1 of the proof.
The corresponding NNs $\Phi^{\tau}_\e$ will have fixed width
and depth bounded as in \eqref{eq:NNwddp}.
\end{remark}
%
%%%%%%%%%%%%%%%%%%%%%%%%%%%%%%%%%%%%%%%%%%%%%%%%%%%%%%%%%55
\section{Conclusions and Generalizations}
\label{sec:Concl}
%%%%%%%%%%%%%%%%%%%%%%%%%%%%%%%%%%%%%%%%%%%%%%%%%%%%%%%%%%
For model, linear singular perturbation boundary value problems
\eqref{eq:de}, \eqref{eq:bc} 
in a bounded interval $I = (-1,1)$,
with data belonging to finite order Sobolev or Besov spaces, 
we
established robust expression rate bounds of 
the parametric solutions by ReLU NNs, whose realizations 
are CpwL functions of $x$. 
The presently proved, 
\emph{robust algebraic expression rate bounds} 
under, in a sense minimal, regularity on the data $b$ and $f$ of 
the problem \eqref{eq:de}, \eqref{eq:bc},
complement the \emph{robust exponential expression rate bounds}
that were shown in \cite{OSX24_1085}
under stronger, analytic regularity hypotheses on the data.

In Sec.~\ref{sec:FinReg}, we established the solution decomposition
\eqref{eq:decomp}, under the regularity hypotheses $b,f\in H^1(I)$.
While this is weaker than, e.g., the $C^2(\overline{I})$ regularity
often imposed in arguments relying on the maximum principle and/or
barrier functions, 
the present approach could be extended to
data regularity below $H^1(I) = W^{1,2}(I)$. 
To this end, one would need to adopt
in the proof of Theorem~\ref{prop:Reg1} 
a non-hilbertian setting, 
in (exponentially weighted versions of) spaces $W^{k,p}(I)$, $k=1,2$,
with $1\leq p <2$, 
requiring, in a sense minimally\footnote{Discontinuous $b(x)$ can give rise to interior layers \cite{RST2nd}.} 
in \eqref{eq:de} that 
$b,f \in W^{1,1}(I) \subset C^0(\overline{I})$.

Similar algebraic expression rate bounds (which are known)
were shown to hold for so-called $h$-version 
Finite Element approximations of fixed polynomial degree $p\geq 1$, 
on either
so-called Shishkin-meshes or on exponentially graded partitions of 
the domain $I$.
We refer to \cite{RST2nd} and in particular to the 
lucid discussion with references in \cite[Section~2]{Linss1985}. 
These results were shown here to imply corresponding 
robust, algebraic expression rate bounds for shallow ReLU NNs.
Corresponding robust expression rate bounds for spiking NNs are
implied via the ReLU-to-Spiking conversion as in \cite{OSX24_1085}.

Using the tools developed in \cite{li2025superapproximationratesreluneural}
we showed furthermore that for 
reaction coefficient $b$ and the source term $f$ in \eqref{eq:de}
belonging to $H^1(I)$, 
deep ReLU NNs furnish (essentially, up to logarithmic terms)
\emph{twice the rate of robust convergence in energy} 
that can be proved for traditional, linear approximations of FE or FV type, 
on the mentioned, locally refined, layer-resolving partitions. 

For constant reaction term, use of $\tanh( \cdot )$ or sigmoid 
activations allows to \emph{exactly} represent the (exponential) 
boundary layer functions $u^{BL}_{\e,\pm}$ in the decomposition \eqref{eq:decomp},
with a subnetwork of fixed size, in effect adding exponential boundary layers
to the NN feature space.
%and a Shishkin-type boundary layer resolving patch construction 
%as e.g. presented in \cite[Section~2]{Linss1985},
%we found that deep ReLU NNs can furnish a robust approximation rate
%which is (up to log-terms) \emph{twice the rate afforded by traditional 
%$h$-FE or $h$-FV  methods}, for given data regularity.

%\todo{[Attn: can also obtain for fixed polynomial degree $p\geq 2$ 
%       under suitable higher (still finite) regularity  
%       corresponding higher robust rates for ReLU NNs of fixed depth $L(p) > 2$]; possibly
%       in Conclusion.}{\color{magenta} [I agree it should appear in the Conclusions.]}
%
\bibliographystyle{abbrv}
\bibliography{references}
\end{document}